\DeclareMathOperator{\diag}{diag}
\DeclareMathOperator{\SL}{SL}
\DeclareMathOperator{\GL}{GL}
\DeclareMathOperator{\Lie}{Lie}
\DeclareMathOperator{\ad}{ad}
\DeclareMathOperator{\Char}{char}
\DeclareMathOperator{\Tr}{Tr}
\DeclareMathOperator{\End}{End}
\DeclareMathOperator{\Span}{span}
\DeclareMathOperator{\M}{Mat}
\newcommand{\C}{\mathbb{C}}
\newcommand{\Z}{\mathbb{Z}}
\newcommand{\A}{\alpha}
\newcommand{\B}{\beta}
\newcommand{\D}{\delta}
\newcommand{\E}{\varepsilon}
\newcommand{\p}{\rho}
\newcommand{\La}{\lambda}
\newcommand{\Si}{\sigma}
\newcommand{\frakg}{\ensuremath{\mathfrak{g}}}
\newcommand{\g}{\ensuremath{\mathfrak{g}}}
\newcommand{\gl}{\ensuremath{\mathfrak{gl}}}
\newcommand{\fraksl}{\ensuremath{\mathfrak{sl}}}
\newcommand{\frakgl}{\ensuremath{\mathfrak{gl}}}
\newcommand{\fh}{\ensuremath{\mathfrak{h}}}
\newcommand{\lra}{\ensuremath{\longrightarrow}}
 \newtheorem{thm}{Theorem}[section]
 \newtheorem{cor}[thm]{Corollary}
 \newtheorem{prop}[thm]{Proposition}
 \theoremstyle{definition}
 \theoremstyle{remark}
 \newtheorem{rem}[thm]{Remark}
 \numberwithin{equation}{section}
\begin{document}

%
%
%
%
%
%
%
%
%
\title[Standard automorphisms of $\fraksl(n+1,\C)$ and their relations]
{Standard automorphisms of $\mathfrak{sl}(n+1,\mathbb{C})$ and their relations}
\author[DRM]{David Reynoso-Mercado}
\address{Instituto de Matem\'aticas, Facultad de Ciencias Exactas y Naturales, Universidad de Antioquia UdeA,\br Calle 70 No. 52-21, Medell\'in, Colombia}
\email{david.reynoso@udea.edu.co}
\thanks{This work was completed with the support of CONACYT}

\subjclass{Primary 17B10;  Secondary 20F36}
\keywords{Braid group, Lie algebras, Semisimple Lie algebra, Standard automorphisms }
\date{February 28, 2023}
\begin{abstract}
Let $ \g $ be the simple Lie algebra of square matrices $ (n+1) \times (n+1) $ with zero trace. There are certain relations concerning standard automorphisms that are considered ``folklore". One can find proof of these in Millson and Toledano Laredo's work [Transformation Groups, Vol. 10, No. 2, 2005, pp. 217-- 254], but the mentioned proof is based on topological arguments which are non-trivial. The aim of this work is to verify these relations in an elementary way.
\end{abstract}
\maketitle
\section{Introduction}

Let $\g\coloneqq\fraksl(n+1,\C)$ be the simple Lie algebra of square matrices $(n+1)\times (n+1)$ with zero trace. We denote the standard basis of the space of square matrices $\M_{n+1}(\C)$ by $\{E_{i,\,j}\}_{i,j=1}^{n+1}$ and the set $\{1,\,2,\cdots,\,n\}$ by $J_n$.

As a Lie algebra, $\g$ is generated by the elements $e_i\coloneqq E_{i,\,i+1}$ and $f_i\coloneqq E_{i+1,\,i}$ for $i\in J_n$.

Note that with the corresponding simple co-roots $h_i\coloneqq E_{i,\,i}-E_{i+1,\,i+1}$, with $i\in J_n$, we have that $[e_i,f_i]= h_i$ and $\fh\coloneqq \bigoplus_{i=1}^n\C h_i$ is a Cartan subalgebra of $\g$. 

Let $D\coloneqq \bigoplus_{i=1}^{n+1} \C E_{i,i}$ and let $\fh$ be a vector subspace of $D$. The dual base of the standard base of $D$ is denoted as $\{\tilde{\E}_i\}_{i\in J_n}$, and the corresponding elements of $\fh^*$ are denoted as $\E_i=\tilde{\E}_i|_{\fh}$.

With this notation $\Phi\coloneqq\{\E_i-\E_j|1\leq i, j\leq n+1\}\setminus\{0\}\subset \fh^*$
is the root system of $\g$, that is, the $\C E_{i,\,j}=\{x\in\g|[h,x]=(\E_i-\E_j)(h)\cdot x\,\,\,\,\forall h \in\fh\}$
are the root spaces of $\g$ with respect to $\fh$. The Weyl group of $\g$, denoted by $\mathcal{W}$, is identified with the symmetric group $S_{n+1}$, which acts linearly on $\fh^*$ such that for $\Si\in \mathcal{W}$
$$\Si(\E_i-\E_j)=\E_{\Si(i)}-\E_{\Si(j)}.$$

The Weyl group $\mathcal{W}$ is generated by the simple transpositions $s_i=(i,i+1)$ with $i\in J_n$, and the following relations form a complete system of relations 
\begin{eqnarray}\label{relsim}
(s_is_j)^{m_{ij}}=1\mbox{ with } i,\,j\in J_n \mbox{ and}
\end{eqnarray} 
\[m_{ij}=\left\{\begin{array}{ll}1 & \mbox{if } i-j=0,\\3 & \mbox{if } |i-j|=1,\\ 2  &\mbox{otherwise}.\end{array}\right.\]

Let us denote the adjoint representation of $\g$ by $\ad\colon\g\lra\frakgl(\g)$, $x\mapsto \ad_x\coloneqq [x,-]$, and define\begin{eqnarray}\label{tau}
 \tau_i\coloneqq \exp(\ad_{e_i})\exp(\ad_{(-f_i)})\exp(\ad_{e_i})\,\,\,\forall i\in J_n.
 \end{eqnarray}
For all $i \in J_n$, it is easy to see that $\tau_i(\C E_{k,\,l})=\C E_{s_i(k),\,s_i(l)}$ for all $k,\, l\in J_n$ with  $k\neq l$. In \cite[21.2]{Hum},  we can see that the weight spaces of the adjoint representation are permuted by the $\tau_i$ in the same way that the corresponding weights (roots in this case) are permuted by the $s_i$, that is $\tau_i(\fh)=\fh$.

The aim of this work is to verify the following realations by the $\tau_i$'s in an elementary way.
\begin{thm}\label{theorem1} Let $n$ be a positive integer and let $\tau_i$'s be the automorphisms  of $\fraksl(n+1,\C)$ defined in  (\ref{tau}). Then the automorphisms $\tau_i$, $1\leq i\leq n$, fulfil the relations
\begin{eqnarray}
\underbrace{\tau_i\tau_j\cdots}_{m_{ij}}&=&\underbrace{\tau_j\tau_i\cdots}_{m_{ij}},\label{0.2}\\
\tau_i^2\tau_j^2&=&\tau_j^2\tau_i^2,\label{0.4}\\
\tau_i^4&=&1,\\
\label{0.6}\tau_i\tau_j^2\tau_i^{-1}&=&\tau_j^2\tau_i^{-2\A_j(h_i)},
\end{eqnarray}
for any $1\leq i \neq j\leq n$, where $m_{ij}$ is the same for the relation in (\ref{relsim}).
\end{thm}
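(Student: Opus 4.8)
The plan is to realise each $\tau_i$ as conjugation by an explicit element of $\SL(n+1,\C)$ and thereby reduce all four families of relations to transparent matrix identities. Since $e_i$ and $f_i$ are nilpotent, $\exp(e_i)$ and $\exp(-f_i)$ are genuine (finite) matrices, and the standard identity $\exp(\ad_x)=\mathrm{Ad}(\exp x)$, where $\mathrm{Ad}(g)$ denotes the automorphism $X\mapsto gXg^{-1}$, gives
\[
\tau_i=\mathrm{Ad}\!\left(\exp(e_i)\exp(-f_i)\exp(e_i)\right)=\mathrm{Ad}(g_i).
\]
A direct computation in the $2\times 2$ block on coordinates $i,i+1$ shows that $g_i$ is the monomial matrix equal to the identity outside that block and to $\left(\begin{smallmatrix}0&1\\-1&0\end{smallmatrix}\right)$ inside it; in particular $g_i\in\SL(n+1,\C)$. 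Because $\mathrm{Ad}$ is a group homomorphism, any matrix identity $g_{i_1}\cdots g_{i_p}=g_{j_1}\cdots g_{j_q}$ immediately yields the corresponding relation among the $\tau_i$, so it will be enough to verify the four relations at the level of the $g_i$.

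Three of the relations then become one-line matrix facts. When $|i-j|\ge 2$ the matrices $g_i,g_j$ have disjoint supports and so commute, which is (\ref{0.2}) for $m_{ij}=2$; when $|i-j|=1$ both lie inside a common $3\times 3$ block, so $g_ig_jg_i=g_jg_ig_j$ reduces to a single computation in $\SL(3,\C)$, which I expect to hold exactly, giving (\ref{0.2}) for $m_{ij}=3$. Inside its block $g_i^2=\left(\begin{smallmatrix}0&1\\-1&0\end{smallmatrix}\right)^2=-\mathrm{Id}$, so $g_i^2$ is the diagonal matrix with $-1$ in positions $i,i+1$ and $+1$ elsewhere; hence $g_i^4=\mathrm{Id}$, which gives $\tau_i^4=1$, and the various $g_i^2$ are diagonal and therefore commute, which gives (\ref{0.4}).

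The substance of the theorem is (\ref{0.6}). Here I would use that $g_i^2=\exp(\pi\sqrt{-1}\,h_i)$, read off from $h_i=E_{i,i}-E_{i+1,i+1}$ and $e^{\pm\pi\sqrt{-1}}=-1$. Conjugating and using that $\tau_i=\mathrm{Ad}(g_i)$ preserves $\fh$ and acts there as the reflection $s_i$ on coroots,
\begin{eqnarray*}
g_i\,g_j^2\,g_i^{-1}
&=&\exp\!\bigl(\pi\sqrt{-1}\,\tau_i(h_j)\bigr)
=\exp\!\bigl(\pi\sqrt{-1}\,(h_j-\A_j(h_i)\,h_i)\bigr)\\
&=&g_j^2\,(g_i^2)^{-\A_j(h_i)},
\end{eqnarray*}
where $s_i(h_j)=h_j-\A_j(h_i)\,h_i$ is the reflection formula (the Cartan matrix of type $A_n$ being symmetric). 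Applying $\mathrm{Ad}$ and recalling $\mathrm{Ad}(g_i^2)=\tau_i^2$ turns this into $\tau_i\tau_j^2\tau_i^{-1}=\tau_j^2\tau_i^{-2\A_j(h_i)}$, which is (\ref{0.6}). Equivalently, and avoiding the exponential entirely, one can observe that conjugation by $g_i$ permutes the diagonal entries of the diagonal matrix $g_j^2$ by $s_i$, and then read the identity $g_ig_j^2g_i^{-1}=g_j^2(g_i^2)^{-\A_j(h_i)}$ straight off the positions of the $-1$'s.

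I expect the only computation with real content to be the adjacent braid relation: one must confirm that $g_ig_jg_i=g_jg_ig_j$ holds on the nose inside the shared $3\times 3$ block, not merely up to a central scalar, since a scalar discrepancy would be invisible after applying $\mathrm{Ad}$ yet would betray a wrong normalisation of the $g_i$. The remaining care is bookkeeping: fixing the signs in $g_i$, hence the precise action $\tau_i(h_j)=s_i(h_j)$ on coroots, so that the exponent $-2\A_j(h_i)$ in (\ref{0.6}) emerges with the correct sign, and checking the degenerate range $|i-j|\ge 2$, where $\A_j(h_i)=0$ and (\ref{0.6}) collapses to $\tau_i\tau_j^2\tau_i^{-1}=\tau_j^2$.
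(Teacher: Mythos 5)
Your proposal is correct and takes essentially the same route as the paper: realise $\tau_i=\mathrm{Ad}(\Sigma_i)$ with $\Sigma_i=\exp(e_i)\exp(-f_i)\exp(e_i)$ the monomial matrix carrying the block $\left(\begin{smallmatrix}0&1\\-1&0\end{smallmatrix}\right)$, and verify all four families of relations as matrix identities in $\SL(n+1,\C)$, using disjoint supports for $m_{ij}=2$, a single $3\times3$ computation for $m_{ij}=3$, and the fact that $\Sigma_i^2$ is diagonal with $-1$ in positions $i,i+1$. The only difference is cosmetic: for the last relation the paper multiplies the matrices out in the two cases $|i-j|=1$ and $|i-j|\ge 2$ rather than writing $\Sigma_i^2=\exp(\pi\sqrt{-1}\,h_i)$ and conjugating, but your suggested alternative of reading the positions of the $-1$'s off the diagonal is exactly the paper's computation.
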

Note that the relations (\ref{relsim}) for $i=j$, we have imply $s_i^2=1$. Under this premise, the remaining relations in (\ref{relsim}) are equivalent to (\ref{0.2}), also known as the braid relations.

It is evident that directly verifying the relations of Theorem \ref{theorem1} is a challenging task. To prove our result, we follow the steps of \cite[Proposition 2.14]{Tol}, which is a more general result.

We will study the elements
$$\Si_i\coloneqq \exp({e_i})\exp(-f_i)\exp({e_i})\in \SL(n+1,\C)$$
and verify that these elements of the simple connected Lie group $\SL(n+1,\C)$ fulfill ``the same'' relations as the elements $\tau_i$ of Theorem \ref{theorem1} inspired by the results of Millson and Toledano Laredo \cite{Tol}. It is important to note that the ``braid relations'' in this context are classical (\cite{chevalley},\cite{Br}), while the other relations are considered ``folklore''.

The theorem's result will follow once we observe that
$$\tau_i(x)=\Si_i x \Si_i^{-1}\,\,\, \forall i\in J_n, x\in\fraksl(n+1,\C).$$

It is important to note that the automorphisms $\tau_i$ can be defined not only for the adjoint representation, but also for any finite-dimensional representation $V$ of $\g$, see \cite[21.2]{Hum}. In this context, they are relevant when one wants to see that the character of $V$ is invariant under the action of the Weyl group.

\section{Preliminaries}\label{1sec}
\subsection{Braid group}
The {\em Artin braid group} $B_n$ is the group generated by the $n-1$ elements $\{\Si_i\}_{i=1}^{n-1}$ and the braid relations:
\begin{enumerate}
\item $\Si_i\Si_j=\Si_j\Si_i$, for all $i$ and $j$ in $\{1,\,2,\cdots,\, n-1\}$, with $|i-j|\geq 2$, and
\item $\Si_i\Si_{i+1}\Si_i=\Si_{i+1}\Si_i\Si_{i+1}$, for  $i$ in $\{1,\,2,\cdots,\, n-2\}$.
\end{enumerate}

 Let $S_n$ be the symmetric group and $s_1,\,s_2,\cdots,\,s_{n-1}\in S_n$ its generators, where  $s_i$ is  the adjacent transposition that swaps  $i$ and $i+1$. It is known that there is a unique $\pi:B_n\lra S_n$ group homomorphism such that $\pi(\Si_i)=s_i$,  see \cite[Lemma 1.2]{KT}, the $\pi$ is called the {\em natural projection}. Moreover, since $\{s_i\}_{i=1}^{n-1}$ is the generating set of $S_n$, we have that $\pi$ is surjective. 
  
The kernel of $\pi:B_n\lra S_n$ is called the {\em pure braid group} and is denoted by $P_n$. The elements of $P_n$ are called {\em pure braids on $n$ strings}.
\subsection{Lie algebras and their representations}\label{3}
 
Let $X$ be a complex matrix. Recall that the exponential of $X$, denoted by $\exp(X)$  or $e^X$, is  the usual power series
$$\exp(X)=\sum_{m=0}^{\infty} \; \frac{X^m}{m!}.$$
\begin{prop} For all $X\in \M_n(\C)$, the series $\exp(X)$  converges and $\exp$ is a continuous function of $X$.
\end{prop}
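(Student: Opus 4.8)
The plan is to reduce both assertions to the well-understood scalar exponential series by equipping $\M_n(\C)$ with a \emph{submultiplicative} norm and exploiting the completeness of this finite-dimensional space. Since all norms on $\M_n(\C)$ are equivalent, I am free to fix a convenient choice, for instance the operator norm $\|X\|\coloneqq\sup_{\|v\|=1}\|Xv\|$ (or any algebra norm), which satisfies $\|XY\|\leq\|X\|\,\|Y\|$ and hence $\|X^m\|\leq\|X\|^m$ for all $m\in\N$. Once such a norm is in place, the whole argument is driven by the comparison of the matrix series with the numerical series $\sum_{m\geq 0}\|X\|^m/m!=e^{\|X\|}$.

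For convergence, I would first record the termwise estimate
\begin{equation*}
\left\|\frac{X^m}{m!}\right\|\leq\frac{\|X\|^m}{m!},
\end{equation*}
so that $\sum_{m=0}^{\infty}\|X^m/m!\|\leq e^{\|X\|}<\infty$; the series is therefore absolutely convergent. Because $\M_n(\C)$ is a finite-dimensional complex vector space, it is a Banach space, and in a Banach space absolute convergence implies convergence of the partial sums. This establishes that $\exp(X)$ is a well-defined matrix for every $X\in\M_n(\C)$.

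For continuity, the key observation is that the convergence is uniform on every closed ball $B_R\coloneqq\{X\in\M_n(\C):\|X\|\leq R\}$. Indeed, for such $X$ the tail is dominated by $\sum_{m\geq N}R^m/m!$, which tends to $0$ as $N\to\infty$ independently of $X$; this is precisely the hypothesis of the Weierstrass $M$-test. Each partial sum $\sum_{m=0}^{N}X^m/m!$ is a matrix whose entries are polynomials in the entries of $X$, hence continuous, so the uniform limit $\exp$ is continuous on $B_R$. Since continuity is a local property and every point of $\M_n(\C)$ lies in the interior of some $B_R$, the function $\exp$ is continuous on all of $\M_n(\C)$.

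I do not expect any serious obstacle here: the argument is entirely routine. The only point requiring a little care is the choice of a submultiplicative norm, since the estimate $\|X^m\|\leq\|X\|^m$ fails for an arbitrary norm; this is dispatched at the outset by selecting an algebra norm and invoking the equivalence of all norms in finite dimensions, which guarantees that convergence and continuity are norm-independent statements.
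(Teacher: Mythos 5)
Your proof is correct and complete; the paper itself offers no argument here, simply citing Section 2.1 of Hall's \emph{Lie Groups, Lie Algebras, and Representations}, and your write-up is precisely the standard argument given there (submultiplicative norm, comparison with $e^{\|X\|}$, completeness of $\M_n(\C)$, and uniform convergence on balls via the Weierstrass $M$-test for continuity). Your explicit attention to choosing an algebra norm and to norm-equivalence in finite dimensions is a welcome bit of care that the cited source also takes, so there is nothing to add.
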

\begin{proof}See \cite[Section 2.1]{B.C.H}.\end{proof}

Let $V$ be a finite dimensional vector space over a field $k$ and let $\End V$ denote the set of linear transformations $V\rightarrow V$. We write  $\mathfrak{gl}(V)$ for $(\End_k(V))_L$, viewed as Lie algebra and call it the \emph{general linear algebra}.  If $V=k^n$, then $\End_k(k^n)\cong\M_n(k)$, and we use $\mathfrak{gl}(n,k)$ to denote $\M_n(k)$. Recall that the \emph{special linear Lie algebra} denoted by $\mathfrak{sl}(n,k)$ is the set of matrices $X\in\mathfrak{gl}(n,k)$ such that $ \Tr(X)=0$, where $\Tr(X)\coloneqq \sum_{i=1}^nx_{i,\,i}$ is the trace of $X$.

A \emph{representation} of a Lie algebra $\frakg$ is a pair $(V,\p)$, where $V$ is a  vector space over $k$  and $\p:\frakg\lra\frakgl(V)$ is a homeomorphism of Lie algebras, that is $\p$ is a linear transformation such that $\p([x,y])=[\p(x),\p(y)]$. If $\g$ is a Lie algebra and $x\in \g$, we define the \emph{adjoint representation} of $\frakg$ as $\ad:\frakg\lra\frakgl(\frakg)$ where  $x\mapsto \ad_x\coloneqq [x, -]$ is a homomorphism of Lie algebras.

\begin{rem}\label{remark1}Let $X,\,Y \in\M_n(\C)$. Then $\exp(\ad_{X})(Y)=\exp(X)\,Y\exp(-X)$. 
\end{rem}

Let $e,\,f,\,h$ be matrices of $\fraksl(2,k)$, defined as follows
\begin{eqnarray}\label{efh}
    e\coloneqq  \left(\begin{array}{cc} 0& 1 \\ 0 & 0 \\  \end{array} \right),\;h\coloneqq  \left(\begin{array}{cc} 1& 0 \\ 0 & -1 \\  \end{array}\right),\;f\coloneqq  \left(\begin{array}{cc} 0& 0 \\ 1 & 0 \\  \end{array} \right).
\end{eqnarray}
Then  the set $\{e,\,h,\,f\}$ is a basis of $\fraksl(2,k)$, such that  $[h,e]=2e$ $[h,f]=-2f$, $[e,f]=h$. The following theorem is taken from \cite[Theorem 1.2.14]{Soergel}.

\begin{thm}[Simple representations of $\fraksl(2,k)$] Let $k$ be a field of characteristic zero. Then:
    \begin{enumerate}
        \item For every positive finite dimension, there exists a unique (up to isomorphism) simple representation of the Lie algebra $\mathfrak{sl}(2, k)$ of that dimension.
        
        \item Let $e, h, f$ be a basis of $\mathfrak{sl}(2, k)$ satisfying the commutation relations:
        \[
        [h, e] = 2e, \quad [h, f] = -2f, \quad [e, f] = h.
        \]
        Every simple representation $L$ of dimension $m + 1$ decomposes under $h$ into one-dimensional eigenspaces:
        \[
        L = L_m \oplus L_{m-2} \oplus \cdots \oplus L_{-m+2} \oplus L_{-m},
        \]
        corresponding to the integer eigenvalues $m, m-2, \ldots, -m+2, -m$. Furthermore, the following hold: If $L_j \neq 0 \neq L_{j+2}$, then $f : L_j \to L_{j-2}$ and $e : L_j \to L_{j+2}$.
    \end{enumerate}
\end{thm}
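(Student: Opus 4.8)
The plan is to analyze the action of the standard basis $\{e,h,f\}$ on a simple representation $L$ by tracking the eigenspaces of $h$. First I would establish the elementary commutation observation: if $v\in L$ satisfies $h\cdot v=\lambda v$, then $h\cdot(e\cdot v)=(\lambda+2)(e\cdot v)$ and $h\cdot(f\cdot v)=(\lambda-2)(f\cdot v)$, which follow at once from $he=eh+[h,e]=eh+2e$ and the analogous relation for $f$. Thus $e$ raises and $f$ lowers the $h$-eigenvalue by $2$. Since $L$ is finite dimensional, $h$ has at least one eigenvector and only finitely many eigenvalues occur; taking an eigenvector at the top of an $e$-chain yields a \emph{highest weight vector} $v_0$ with $h\cdot v_0=\lambda v_0$ and $e\cdot v_0=0$.

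Next I would set $v_i\coloneqq f^i\cdot v_0$ for $i\ge 0$ (with $v_{-1}=0$) and prove by induction the action formulas
\[
h\cdot v_i=(\lambda-2i)\,v_i,\qquad f\cdot v_i=v_{i+1},\qquad e\cdot v_i=i(\lambda-i+1)\,v_{i-1}.
\]
The first two are immediate from the observation and the definition; the third is the crux and is obtained by writing $e\cdot v_i=ef\cdot v_{i-1}=(fe+h)\cdot v_{i-1}$ and substituting the inductive hypotheses. The nonzero $v_i$ have pairwise distinct $h$-eigenvalues, hence are linearly independent, so finite dimensionality forces a least index $m$ with $v_m\neq 0$ but $v_{m+1}=0$. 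Applying $e$ to the relation $v_{m+1}=0$ and invoking the third formula gives $0=(m+1)(\lambda-m)\,v_m$, and since $v_m\neq 0$ and $m+1\neq 0$ in characteristic zero we conclude $\lambda=m\in\Z_{\ge 0}$.

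With $\lambda=m$ fixed, the span of $v_0,\dots,v_m$ is stable under $e,h,f$, so by simplicity it is all of $L$; this yields $\dim L=m+1$ together with the weight decomposition $L=L_m\oplus\cdots\oplus L_{-m}$ into one-dimensional $h$-eigenspaces and the stated raising/lowering behaviour of $e$ and $f$. For existence I would promote the three displayed formulas to the \emph{definition} of an action on a space with basis $v_0,\dots,v_m$ and verify directly that $[h,e]=2e$, $[h,f]=-2f$, and $[e,f]=h$ hold on each basis vector; simplicity follows because any nonzero subrepresentation contains some $h$-eigenvector, hence some $v_i$, from which repeated application of $e$ and $f$ recovers all of $v_0,\dots,v_m$. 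Uniqueness is then automatic: any simple module of dimension $m+1$ has highest weight $m$ by the above, and sending its highest weight vector to $v_0$ is forced by the formulas to be an isomorphism onto the constructed model.

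The step I expect to be the main obstacle is the integrality conclusion $\lambda=m\in\Z_{\ge 0}$, since everything rests on the coefficient $i(\lambda-i+1)$ appearing in the $e$-action; the induction establishing that coefficient must be carried out carefully, and it is precisely here that the characteristic-zero hypothesis is genuinely used, both to keep the eigenvalues $m,m-2,\dots,-m$ distinct as scalars and to guarantee $m+1\neq 0$ when dividing.
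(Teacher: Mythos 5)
The paper does not actually prove this statement: it is quoted verbatim from Soergel's notes (\cite[Theorem 1.2.14]{Soergel}) and used as a black box, so there is no in-paper argument to compare against. Your proposal is the standard highest-weight argument, and its core is sound: the raising/lowering computation, the induction giving $e\cdot v_i=i(\lambda-i+1)v_{i-1}$, the truncation $0=(m+1)(\lambda-m)v_m$ forcing $\lambda=m\in\Z_{\geq 0}$, and the existence/uniqueness via the explicit model are all correct, and you correctly locate where characteristic zero enters.

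The one genuine gap is at the very first step. The theorem is stated for an arbitrary field $k$ of characteristic zero, not an algebraically closed one, and your claim that ``since $L$ is finite dimensional, $h$ has at least one eigenvector'' is exactly what fails for a general linear operator over a non-closed field such as $\Q$. Of course $h$ \emph{does} have eigenvectors here, because its eigenvalues turn out to be integers --- but that is part of what is being proved, so the argument as written is circular at this point. A standard repair: from $[h,e^n]=2ne^n$ one gets $e^n=\tfrac{1}{2n}[h,e^n]$, hence $\Tr(e^n)=0$ for all $n\geq 1$, so $e$ is nilpotent in characteristic zero and $\ker e\neq 0$; $\ker e$ is $h$-stable, and for $0\neq v\in\ker e$ the operator identity $e f^n v=n f^{n-1}(h-n+1)v$ together with the nilpotence of $f$ forces $h$ to satisfy a polynomial with integer roots on the submodule generated by $v$, which splits over $k$ and yields an honest eigenvector. (Alternatively, pass to $\bar{k}$, prove integrality of the $h$-eigenvalues there, and descend.) If you are content to assume $k=\bar{k}$ --- which suffices for this paper, where only $k=\C$ is used --- your proof is complete as it stands.
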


\subsection{Root space decomposition}\label{4} 

Let $\g\coloneqq\gl(n,k)$ and let $\fh\subset\g$ be the subset of diagonal matrices. Then, $\fh$ is subalgebra of $\g$. Let $\p\coloneqq\ad_\g |_\fh:\fh\lra\gl(\g)$ be a representation of $\fh$ with $\p(x)$ semisimple for all  $x\in\fh$. Let  $x=\diag(x_1,x_2,\cdots,x_n)\in\fh$ and $\{E_{i,\,j}\}_{i,j=1}^n$ be the standard basis of $\M_{n}(\C)$, where $E_{i,\,j}$ be the matrix whose $ij$-th entry is $1$ and all other entries are $0$. In this case, we have the commutation relation $[x,E_{i,\,j}]=(x_i-x_j)E_{i,\,j}$. We can define a map $\E_i:\fh\lra\C$ by  setting $\E_i(x)= x_i$. Thus we obtain:
$$[x,E_{i,\,j}]=(\E_i-\E_j)(x) E_{i,\,j}.$$
Let $P(\g)$ be the set of weights of $\g$. It is easy to see that 
$$P(\g)=\{\E_i-\E_j | 1\leqslant i\leqslant j \leqslant n\}.$$
For $i\neq j$, we have $\g_{\E_i-\E_j}= \C E_{i,\,j}$, and $\g_0=\fh$, provided that $\Char(k)\neq 2$.

 Let $\g$ a semisimple Lie algebra over $\C$ and, let $\fh\subset\g$ be a subalgebra. We call  $\fh$ a \emph{Cartan subalgebra} if and only if $\fh$ is abelian, consists of semisimple elements, and is maximal with this property. Let $\fh$ be a subalgebra of $\g$ a semisimple Lie algebra over $\C$. We will write $\langle\La,h\rangle$ to denote  $\La(h)$ for all $\La\in \fh^*$ and  $h\in\fh$, and we use $\p$ to denote  the adjoint map $\ad_\g |_\fh:\fh\lra\gl(\g)$. Since $\fh$ is Cartan subalgebra, its elements are semisimple. Therefore, by \cite[Proposición 2.2.2]{Soergel}, we know that $\p(h)$ is semisimple $\forall h\in\fh$. Thus,  $\g$ decomposes as $\bigoplus_{\La\in\fh^*} \g_\La$, where $\g_\La=\{x\in\g | [h, x]= \langle\La,h\rangle (x)\,\,\forall h\in\fh\}.$

Now, consider $\g=\fraksl(n+1,\C)$ and  $\fh\subset\g$ the subalgebra consisting of diagonal matrices with trace zero. Then, $\A_i=\E_i-\E_{i+1}$ with $i\in\{1,\cdots,\, n\}$  forms a basis of $\fh^*$. Moreover, we have that $\Phi(\g,\fh)=\{\A_{ij}=\E_i-\E_j |i\neq j\}$. Thus, 
$$\fraksl(n+1,\C)=\fh\oplus\bigoplus_{\A_{ij}\in\Phi} \C E_{i,\,j}.$$
The following theorem is taken from  \cite[Theorem 2.3.12]{Soergel}.
\begin{thm}\label{theorem5.1}Let $\g$ be a semisimple Lie algebra over $\C$, $\fh\subset\g$ a Cartan subalgebra and  $\Phi$  the corresponding root system. The following statements hold:
\begin{enumerate}
\item $\fh$ is itself centralizer. 
\item All root spaces have one dimension. Moreover, for all $\A\in\Phi$,  there exists an injective Lie algebras homomorphism $\varphi_\A:\fraksl(2,\C)\lra\g$ defined as
$$  \varphi_\A\left(\left( \begin{array}{cc} 0 & \C \\ 0 & 0 \end{array} \right)\right)=\g_\A\mbox{, } \varphi_\A\left(\left( \begin{array}{cc} 0 & 0 \\ \C & 0 \end{array} \right)\right)=\g_{-\A}\mbox{ y }\varphi_\A\left(\C\left( \begin{array}{cc} 1 & 0 \\ 0 & -1 \end{array} \right)\right)=[\g_\A,\g_{-\A}]\mbox{.}$$
In particular, $\A\in\Phi$ if and only if $-\A\in\Phi$.
\item If $\A \in\Phi$, then $\C\A\cap\Phi=\{\A,-\A\}$.
\item If  $\A\mbox{, }\B\in\Phi\mbox{ and }\A+\B\in\Phi,$ then $[\g_\A,\g_\B]=\g_{\A+\B}$.
\end{enumerate}
\end{thm}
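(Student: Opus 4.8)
The backbone of the argument is the nondegenerate invariant Killing form $\kappa(x,y)=\Tr(\ad_x\ad_y)$, which is nondegenerate precisely because $\g$ is semisimple, together with the representation theory of $\fraksl(2,\C)$ recalled above. The first tool I would record is the \emph{orthogonality of root spaces}: invariance $\kappa([h,x],y)=-\kappa(x,[h,y])$ forces $(\langle\A,h\rangle+\langle\B,h\rangle)\kappa(x,y)=0$ for $x\in\g_\A$, $y\in\g_\B$, $h\in\fh$, so $\kappa(\g_\A,\g_\B)=0$ whenever $\A+\B\neq0$. Combined with global nondegeneracy of $\kappa$ this shows that $\kappa$ pairs $\g_\A$ with $\g_{-\A}$ nondegenerately; in particular $\g_{-\A}\neq0$ as soon as $\g_\A\neq0$, which is exactly the assertion $\A\in\Phi\Leftrightarrow-\A\in\Phi$ in part (2). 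For $\A\in\fh^*$ I would introduce the vector $t_\A\in\fh$ defined by $\kappa(t_\A,h)=\langle\A,h\rangle$ for all $h\in\fh$, and, once part (1) provides $[\g_\A,\g_{-\A}]\subseteq\fh$, check the identity $[x,y]=\kappa(x,y)\,t_\A$ for $x\in\g_\A$, $y\in\g_{-\A}$.

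For part (1) I would prove that a maximal toral subalgebra is self-centralizing, i.e. $C_\g(\fh)=\g_0=\fh$. The plan is the standard chain: $C:=C_\g(\fh)$ is stable under the abstract Jordan decomposition (because $\ad_x$ commutes with $\ad\fh$, hence so do its semisimple and nilpotent parts); every semisimple element of $C$ lies in $\fh$ by maximality of $\fh$ among toral subalgebras; $\kappa|_\fh$ is nondegenerate; $C$ is nilpotent; and finally $[C,C]\cap\fh=0$ forces $C$ to be abelian and then $C=\fh$. I expect this to be the main obstacle, since each link in the chain is genuinely used by the next, and the nilpotency and abelianness steps rely on Engel's and Lie's theorems rather than on a one-line manipulation.

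Parts (2) and (3) I would obtain simultaneously from $\fraksl(2,\C)$-theory. Fixing $\A\in\Phi$ and $0\neq e_\A\in\g_\A$, the nondegenerate pairing gives $f_\A\in\g_{-\A}$ with $[e_\A,f_\A]=t_\A$, and a short solvability argument (if $\langle\A,t_\A\rangle=0$ then $\C e_\A+\C f_\A+\C t_\A$ is solvable, making $\ad_{t_\A}$ both semisimple and nilpotent, hence $t_\A=0$ and $\A=0$, a contradiction) shows $\langle\A,t_\A\rangle\neq0$. Rescaling produces an $\fraksl(2,\C)$-triple $(e_\A,h_\A,f_\A)$ with $h_\A=2t_\A/\langle\A,t_\A\rangle$, hence the injective homomorphism $\varphi_\A$ of part (2) carrying the standard basis to $e_\A,h_\A,f_\A$, with image $\C e_\A\oplus\C h_\A\oplus\C f_\A$ and $\varphi_\A(\C\,\diag(1,-1))=\C t_\A=[\g_\A,\g_{-\A}]$. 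To get $\dim\g_\A=1$ and $\C\A\cap\Phi=\{\A,-\A\}$ I would view $M:=\fh\oplus\bigoplus_{c\A\in\Phi}\g_{c\A}$ as an $\fraksl(2,\C)$-module for this triple: $h_\A$ acts on $\g_{c\A}$ by $2c$, so all $2c$ are integers, and $\ker\A\subset\fh$ is a trivial submodule of dimension $\dim\fh-1$. By complete reducibility, counting the zero weight space (which is exactly $\fh$) shows the only even-highest-weight summands are $\ker\A$ and the adjoint copy $\C e_\A\oplus\C h_\A\oplus\C f_\A$; hence the weight-$2$ space has dimension $1$, giving $\dim\g_\A=1$, and no weight $4$ occurs, giving $2\A\notin\Phi$ for every root. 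Feeding this back, integrality of both $2c$ and $2/c$ (the latter from running the same module for the root $c\A$) forces $c\in\{\pm1,\pm2,\pm\tfrac12\}$, and the doubling obstruction removes $\pm2$ and $\pm\tfrac12$, leaving $\C\A\cap\Phi=\{\A,-\A\}$.

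For part (4), assuming $\A,\B,\A+\B\in\Phi$, the inclusion $[\g_\A,\g_\B]\subseteq\g_{\A+\B}$ is immediate from $[h,[x,y]]=\langle\A+\B,h\rangle[x,y]$, so by part (2) it suffices to show the bracket is nonzero. The plan is to study the $\A$-string $V:=\bigoplus_{k\in\Z}\g_{\B+k\A}$ as a module over the triple $(e_\A,h_\A,f_\A)$. All weights of $V$ are congruent modulo $2$ and every weight space is at most one-dimensional, so two distinct irreducible summands would have to share a weight; since equal-parity $\fraksl(2,\C)$-irreducibles always overlap (both even ones contain weight $0$, both odd ones contain weight $1$), $V$ is irreducible. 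Because $\g_{\A+\B}\neq0$, the weight of $\g_\B$ is not the top of the string, so the raising operator $\ad_{e_\A}$ maps the one-dimensional $\g_\B$ isomorphically onto $\g_{\A+\B}$ by the last assertion of the quoted $\fraksl(2,\C)$ theorem; therefore $[\g_\A,\g_\B]=\g_{\A+\B}$. The only delicate point here is the irreducibility (unbrokenness) of the string, which is exactly where the multiplicity-one and parity observations do the work.
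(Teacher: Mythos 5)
Your proof is correct, but a comparison with ``the paper's proof'' is somewhat moot: the paper does not prove this theorem at all --- it is quoted verbatim from Soergel's lecture notes (Theorem 2.3.12 there), and the paper's role for it is purely as a citation. What you have written is, in outline, exactly the classical argument that the cited source (and Humphreys, \S 8) gives: orthogonality of $\g_\A$ and $\g_\B$ under the Killing form $\kappa$ unless $\A+\B=0$, hence a nondegenerate pairing $\g_\A\times\g_{-\A}\to\C$ and the identity $[x,y]=\kappa(x,y)\,t_\A$; the Humphreys-style chain showing a maximal toral subalgebra is self-centralizing for part (1); the solvability trick giving $\langle\A,t_\A\rangle\neq 0$ and the resulting $\fraksl(2,\C)$-triple; the module $M=\fh\oplus\bigoplus_{c\A\in\Phi}\g_{c\A}$ with zero-weight space exactly $\fh$, whose even summands are forced to be $\ker\A$ plus one adjoint copy, yielding $\dim\g_\A=1$ and $2\A\notin\Phi$, and then $c\in\{\pm 1\}$ by the $2c,2/c\in\Z$ argument; and finally irreducibility of the $\A$-string through $\B$ via multiplicity one plus parity, so that $\ad_{e_\A}$ carries $\g_\B$ onto $\g_{\A+\B}$. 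Two small points you should make explicit if you write this up in full: in part (4) the multiplicity-one claim for the string $V=\bigoplus_k\g_{\B+k\A}$ needs the observation that $\B+k\A\neq 0$ for all $k$ (which follows since $\A+\B\in\Phi$ and part (3) force $\B\neq\pm\A$), and in part (2) the injectivity of $\varphi_\A$ deserves a sentence (it is automatic because $\fraksl(2,\C)$ is simple and the map is nonzero). Neither is a gap in the mathematics, only in the write-up; your proposal would serve as a self-contained replacement for the external citation.
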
 
 For $\A\in\Phi$, we can define a {\em co-root} $h_\alpha$ as an element in $\fh$ such that $h_\A\in[\g_\A,\g_{-\A}]$ and $\langle\A,h_\A\rangle=2$.
\begin{thm}\label{theorem5.2} Let $\g$, $\fh$ and $\Phi$ be as in Theorem \ref{theorem5.1}. The following statements hold: 
\begin{enumerate}
\item For  any $\A\mbox{, }\B\in\Phi$, we have that $\langle\B,h_\A\rangle\in\Z$ and $s_\A(\B)\coloneqq\B-\langle\B,h_\A\rangle\A\in\Phi$.
\item $\Span_\C\Phi=\fh^*$.
\end{enumerate}
\end{thm}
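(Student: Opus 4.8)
The plan is to reduce both statements to the representation theory of $\fraksl(2,\C)$ recalled earlier, applied to the $\fraksl(2,\C)$-subalgebra $\varphi_\A(\fraksl(2,\C))\subseteq\g$ furnished by Theorem \ref{theorem5.1}(2).

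First I would fix $\A\in\Phi$ and set $\mathfrak{s}_\A\coloneqq\varphi_\A(\fraksl(2,\C))$, spanned by elements $e_\A\in\g_\A$, $f_\A\in\g_{-\A}$ and the co-root $h_\A\in[\g_\A,\g_{-\A}]$ obeying the $\fraksl(2,\C)$ commutation relations. Restricting the adjoint representation of $\g$ to $\mathfrak{s}_\A$ makes $\g$ a finite-dimensional $\fraksl(2,\C)$-module on which $h_\A$ acts. For any $\B$ with $\g_\B\neq 0$ the space $\g_\B$ is an $h_\A$-eigenspace of eigenvalue $\langle\B,h_\A\rangle$, since $[h_\A,x]=\langle\B,h_\A\rangle\,x$ for $x\in\g_\B$. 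Because $h$ acts with integer eigenvalues on every finite-dimensional $\fraksl(2,\C)$-module (by the structure theorem for simple $\fraksl(2,\C)$-modules together with complete reducibility), this yields $\langle\B,h_\A\rangle\in\Z$ at once.

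To obtain $s_\A(\B)\in\Phi$ I would study the $\A$-string through $\B$, namely $V\coloneqq\bigoplus_{k\in\Z}\g_{\B+k\A}$. If $\B=\pm\A$ the claim is immediate from $\langle\A,h_\A\rangle=2$ and Theorem \ref{theorem5.1}(3), so assume $\B\neq\pm\A$; then no summand coincides with $\fh=\g_0$, each nonzero piece is one-dimensional by Theorem \ref{theorem5.1}(2), and $V$ is $\mathfrak{s}_\A$-stable because $e_\A$ and $f_\A$ raise and lower the $\A$-grading while $h_\A$ preserves each $\g_{\B+k\A}$. The eigenvalue of $h_\A$ on $\g_{\B+k\A}$ is $\langle\B,h_\A\rangle+2k$, so the set of $h_\A$-weights of $V$ is $\{\langle\B,h_\A\rangle+2k:\B+k\A\in\Phi\}$. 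Since the weights of any finite-dimensional $\fraksl(2,\C)$-module are symmetric about $0$, the value $-\langle\B,h_\A\rangle$ also occurs, forcing $\g_{\B+k_0\A}\neq 0$ for $k_0=-\langle\B,h_\A\rangle$; as this summand is a genuine root space, $\B-\langle\B,h_\A\rangle\A=s_\A(\B)\in\Phi$.

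Finally, for part (2) I would argue by contradiction. If $\Span_\C\Phi\subsetneq\fh^*$, its annihilator in $\fh\cong(\fh^*)^*$ contains a nonzero $h$ with $\langle\A,h\rangle=0$ for all $\A\in\Phi$; then $[h,\g_\A]=0$ for every root and $[h,\fh]=0$ as $\fh$ is abelian, so the decomposition $\g=\fh\oplus\bigoplus_\A\g_\A$ shows $h$ is central in $\g$, and semisimplicity of $\g$ forces the center to vanish, a contradiction. I expect the main obstacle to be the string argument of the third paragraph: one must verify that $V$ is genuinely $\mathfrak{s}_\A$-stable and that the reflected weight lands on an honest root space rather than on $\g_0=\fh$ or outside $\Phi$, which is precisely where Theorem \ref{theorem5.1}(2),(3) and the weight-symmetry of $\fraksl(2,\C)$-modules carry the load.
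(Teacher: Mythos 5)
Your argument is correct. Note first that the paper does not actually prove this statement: its ``proof'' is the single line ``See \cite[Theorem 2.3.17]{Soergel}'', so there is no internal argument to compare against. What you supply is the standard $\fraksl(2,\C)$-theoretic proof that such a reference would contain: integrality of $\langle\B,h_\A\rangle$ from the eigenvalues of $h$ on finite-dimensional $\fraksl(2,\C)$-modules, the reflection $s_\A(\B)\in\Phi$ from the symmetry about $0$ of the $h_\A$-weights on the $\A$-string $\bigoplus_k\g_{\B+k\A}$, and part (2) from the vanishing of the center of a semisimple Lie algebra. The details check out: your reduction to $\B\neq\pm\A$ correctly uses Theorem \ref{theorem5.1}(3) to rule out $\g_0=\fh$ appearing in the string, the parity of $-\langle\B,h_\A\rangle$ matches that of $\langle\B,h_\A\rangle$ so the reflected weight really is of the form $\langle\B,h_\A\rangle+2k_0$ with $k_0\in\Z$, and the one ingredient not stated explicitly in the paper --- complete reducibility of finite-dimensional $\fraksl(2,\C)$-modules, needed both for integrality of eigenvalues and for the weight symmetry on the non-simple module $V$ --- is a standard fact (Weyl's theorem) that you are entitled to invoke. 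In short, you have replaced an external citation with a complete and correct self-contained proof.
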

\begin{proof}See \cite[Theorem 2.3.17]{Soergel}.\end{proof}
For the Lie algebra $\fraksl(n+1,\C)$ we have that $\Phi=\{\A_{ij}=\E_i-\E_j |i\neq j\}$ its co\-rresponding root system and $\Pi=\{\A_{i}=\E_i-\E_{i+1} |1\leq i\leq n\}$ is the set of simple roots.

Let $\mathcal{W}$ be the Weyl group of $\Phi$. We want to show that $\mathcal{W}=\langle\{s_{\A_i}\mid1\leq i\leq n\}\rangle$. Due to the definition of Weyl group, it is enough to show that $s_{\A_{ij}}\in\langle\{s_i\mid1\leq i\leq n\}\rangle$ for all $\A_{ij}\in \Phi$. First, from the definition of $s_\A(\B)$ (see theorem \ref{theorem5.2}), we have
\begin{eqnarray}\label{eq5.1}
s_{\A_i}(\A_j)= \left\{ \begin{array}{cl}
-\A_i&\mbox{ if } j=i,\\  
\A_{ij}&\mbox{ if } j=i+1,\\ 
\A_{ji}&\mbox{ if } j=i-1,\\ 
\A_j&\mbox{ if } |j-i|\geq 2,
\end{array}
\right. 
\end{eqnarray}
for any $1\leq i,\,j\leq n$. On other hand, for any $1\leq i< j\leq n$ and $1\leq k\leq n$, we have
\begin{eqnarray}\label{eq5.2}
s_{\A_{ij}}(\A_k)= \left\{ \begin{array}{cl}
\A_{ik}&\mbox{if } k=j+1,\\  
\A_{kj}&\mbox{if } k=i-1,\\ 
-\A_{ij-1}&\mbox{if }k=j,\\ 
-\A_{i+1j}&\mbox{if }k=i,\\ 
\A_k&\mbox{otherwise.}
\end{array}
\right. 
\end{eqnarray}

Using equation (\ref{eq5.1}) and (\ref{eq5.2}), it is easy to verify that
$$s_{ij}=s_is_{i+1}\cdots s_{j-i}s_js_{j-1}\cdots s_{i+1}s_i\in\langle\{s_i\mid1\leq i\leq n\}\rangle.$$
Therefore, $\mathcal{W}=\langle\{s_i\mid1\leq i\leq n\}\rangle$. Moreover, simple computations reveal that $\mathcal{W}$ is isomorphic to the symmetric group $S_{n+1}$.
\section{Tits extension}\label{7}

In this section, we will recall the arguments about generalised braid groups presented by Millson and Toledano (\cite[Section 2]{Tol}). 

Let $\g$ be a complex, semisimple Lie algebra with Cartan subalgebra $\fh$ and $\mathcal{W}$ be the Weyl group. Let $\Phi$ be a root system, $\Pi=\{\A_1,\cdots,\A_n\}$ a basis of $\Phi$ and $V$ be a finite-dimensional $\g$-module. We denote by $\fh_{reg}\coloneqq\fh\setminus\bigcup_{\A\in \Phi} \ker(\A)$ the set of regular elements. 

Fix a basepoint $t_0\in\fh_{reg} $. Let  $[t_0]$ be its image in $\fh_{reg}/ \mathcal{W}$, and let $P_\g=\pi_1(\fh_{reg}; t_0)$, $B_\g=\pi_1(\fh_{reg}/ \mathcal{W}; [t_0])$ be the generalized pure and full braid groups of type $\g$. The fibration $\fh_{reg}\lra\fh_{reg}/\mathcal{W}$ gives rise to the exact sequence
 $$1\lra P_\g\lra B_\g\lra \mathcal{W}\lra 1,$$
with $B_\g\lra \mathcal{W}$ is obtained by associating to $p\in B_\g$ to the unique $w\in \mathcal{W}$ such that $w^{-1}t_0=\tilde{p}(1)$, where $\tilde{p}$ is the unique lift of $p$ to a path in $\fh_{reg}$ such that $\tilde{p}(0)=t_0$.

Let  $G$ be the complex, connected, and simply-connected Lie group with Lie algebra $\g$, $T$  its torus with Lie algebra $\fh$, and $N(T)\subset G$  the normalizer of $T$ in $G$, so that $\mathcal{W}\cong N(T)/T$. We regard $B_\g$ as acting on $V$ by choosing a homeomorphism $\Si:B_\g\lra N(T)$ compatible with the diagram in Figure \ref{diagramas}. 
\begin{figure}[ht]
\centering
\begin{tikzpicture}
[->,>=stealth',shorten >=1pt,auto,node distance=2cm,thick,main node/.style=]
  \node[main node] (1) {$B_\g$};   
  \node[main node] (2) [right of=1] {$N(T)$};
  \node[main node] (3) [below of =2]{$\mathcal{W}$};   
\path[every node/.style={font=\sffamily\small}]     
(1) edge node   {$\Si$} (2)         
     edge node         {} (3)           
(2) edge node  {} (3); 
\end{tikzpicture}
\caption{Diagram} 
\label{diagramas}
\end{figure}

By Brieskorn's theorem (see \cite[page 57]{Br}), $B_\g$ is presented on generators $S_1,S_2,\cdots,S_n$, labelled by the simple reflections $s_1,\cdots, s_n\in \mathcal{W}$, with relations
\begin{equation}\label{trenzas}\underbrace{S_iS_j\cdots}_{m_{ij}}=\underbrace{S_jS_i\cdots}_{m_{ij}},\end{equation}
where $m_{ij}$ is the order of $s_is_j$ in the Weyl group. One of the first works in which the relations from (\ref{trenzas}) appears is \cite[Lemma 56]{chevalley}.

Tits has given a simple construction of a canonical class of homomorphisms $\Si$ that differ from each other via conjugation by an element of $T$, see \cite{Ti}. 
\section{Proof of Theorem \ref{theorem1}}\label{6}
This section will divide the proposition of Milson and Toledano \cite[Proposition 2.14]{Tol}, into three new propositions to verify the relations (\ref{0.2})-(\ref{0.6}) in our particular case, the Lie algebra $\fraksl(n+1, \C)$.

From this point on, we will be working with the Lie group of matrices $G=\SL(n+1;\C)$, with $\g=\fraksl(n+1, \C)$ its Lie algebra. For $\A_i$ a simple root as in section \ref{4}, we can define          
$$G_i =\left\{
                                  \left( \begin{array}{c|c|c}
                                  I_{i-1} &0_{i-1\times 2} &0_{i-1\times n-i}   \\\hline
                                  0_{2\times i-1}&A&0_{2\times n-i}\\\hline
                                0_{n-i\times i-1} &0_{n-i\times 2}  & I_{n-i} 
                                  \end{array} \right) | \,A\in\SL(2,\C)\right\}\subset G,$$
where $I_j$ is the identity matrix in $\GL(j;\C)$ and the matrix $0_{kl}\in\M_{k\times l}(\C)$ is composed entirely of zeros.

It is easy to see that $G_i\cong \SL(2;\C)$ and that the Lie algebra $\Lie(G_i)$ is generated by
$$e_i=
                                 \left( \begin{array}{c|c|c}
                                  I_{i-1} &0_{i-1\times 2} &0_{i-1\times n-i}   \\\hline
                                  0_{2\times i-1}&e&0_{2\times n-i}\\\hline
                                0_{n-i\times i-1} &0_{n-i\times 2}  & I_{n-i} 
                                  \end{array} \right)\mbox{,  }h_i=
                                  \left( \begin{array}{c|c|c}
                                  I_{i-1} &0_{i-1\times 2} &0_{i-1\times n-i}   \\\hline
                                  0_{2\times i-1}&h&0_{2\times n-i}\\\hline
                                0_{n-i\times i-1} &0_{n-i\times 2}  & I_{n-i} 
                                  \end{array} \right)$$  and  $$f_i =
                                  \left( \begin{array}{c|c|c}
                                  I_{i-1} &0_{i-1\times 2} &0_{i-1\times n-i}   \\\hline
                                  0_{2\times i-1}&f&0_{2\times n-i}\\\hline
                                0_{n-i\times i-1} &0_{n-i\times 2}  & I_{n-i} 
                                  \end{array} \right) ,$$                                  
where $e,\,f,\,h\in\M_2(\C)$ are as in Equation (\ref{efh}).

For any $i\in\{1,\cdots,n\}$, let $T_i\coloneqq\{\exp(xh_i) |\,x\in\C\}\subset G_i$, for $1\leq i\leq n$, be an algebraic torus or more precisely:

$$T_i=\left\{   \left( \begin{array}{c|c|c}
                                  I_{i-1} &0_{i-1\times 2} &0_{i-1\times n-i}   \\\hline
                                  0_{2\times i-1}&\exp(xh)&0_{2\times n-i}\\\hline
                                0_{n-i\times i-1} &0_{n-i\times 2}  & I_{n-i} 
                                  \end{array} \right) | x\in\C\right\}.$$
Let $T\subset \SL(n+1;\C)$ be the set of diagonal matrices with determinant equal to $1$. 
\begin{prop}
    Let $T_i$ be the torus of $G_i$ group as above, then $T$ is equal to $\langle\bigcup_{i=1}^n T_i\rangle$.
\end{prop}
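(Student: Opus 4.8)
The plan is to prove the two inclusions $\langle\bigcup_{i=1}^n T_i\rangle\subseteq T$ and $T\subseteq\langle\bigcup_{i=1}^n T_i\rangle$ separately. First I would make the description of $T_i$ completely explicit. Since $h_i=E_{i,i}-E_{i+1,i+1}$ is diagonal with entries $1$ and $-1$ in positions $i$ and $i+1$ (and $0$ elsewhere), for $x\in\C$ we have $\exp(xh_i)=\diag(1,\dots,1,e^x,e^{-x},1,\dots,1)$, with $e^x$ in the $i$-th slot and $e^{-x}$ in the $(i+1)$-th slot. As $x$ runs over $\C$ the entry $e^x$ runs over all of $\C^\times$, so
$$T_i=\{\diag(1,\dots,1,t,t^{-1},1,\dots,1)\mid t\in\C^\times\},$$
with $t$ in position $i$. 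Each such matrix is diagonal and has determinant $t\cdot t^{-1}=1$, hence lies in $T$; since $T$ is a subgroup of $\SL(n+1,\C)$, it contains the subgroup generated by $\bigcup_i T_i$. This gives the easy inclusion.

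For the reverse inclusion I would produce an explicit factorization of an arbitrary element of $T$. Given $\diag(t_1,\dots,t_{n+1})\in T$, set $s_k\coloneqq t_1t_2\cdots t_k\in\C^\times$ for $1\leq k\leq n$, and let $a_k\coloneqq\diag(1,\dots,1,s_k,s_k^{-1},1,\dots,1)\in T_k$ (with $s_k$ in position $k$). A direct diagonal multiplication shows that the product $a_1a_2\cdots a_n$ is diagonal, with first entry $s_1$, with $k$-th entry $s_{k-1}^{-1}s_k$ for $2\leq k\leq n$, and with last entry $s_n^{-1}$. Substituting $s_k=t_1\cdots t_k$ makes these telescope, yielding
$$a_1a_2\cdots a_n=\diag(t_1,t_2,\dots,t_n,s_n^{-1}).$$

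This is exactly where the determinant hypothesis enters, and it is the one substantive point of the argument: the last entry $s_n^{-1}$ equals $t_{n+1}$ if and only if $t_1t_2\cdots t_{n+1}=1$, i.e. if and only if $\det\diag(t_1,\dots,t_{n+1})=1$, which holds precisely because the element lies in $T$. Hence every element of $T$ is realized as $a_1\cdots a_n$ with $a_k\in T_k$, which proves $T\subseteq\langle\bigcup_i T_i\rangle$ and completes the equality. I do not expect a genuine obstacle here: the whole computation is an elementary telescoping product, and the only real content is the observation that the consistency of the resulting system of entrywise equations is equivalent to the single relation $\prod_i t_i=1$ that defines $T$. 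A pleasant byproduct of this approach is the stronger statement that each element of $T$ can be written with exactly one factor drawn from each $T_i$, in increasing order and with no inverses.
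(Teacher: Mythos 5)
Your proof is correct and follows essentially the same route as the paper: both arguments factor a diagonal matrix $\diag(t_1,\dots,t_{n+1})$ of determinant $1$ as a telescoping product of one element from each $T_k$ built from the partial products $t_1\cdots t_k$ (the paper writes these as $\exp(\sum_{j\le k}y_j h_k)$ with $e^{y_j}=t_j$, which is the same thing). The only cosmetic difference is that you parametrize $T_i$ by $t\in\C^\times$ directly instead of via the exponential.
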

\begin{proof}
Since it is clear that any set $T_i$  is a subset of $T$, then $\langle\bigcup_{i=1}^n T_i\rangle\subset T$. 

On other hand, for $X=\diag(x_1,\cdots,x_{n+1})$ an element of $T$, for any $i\in\{1,\cdots,n\}$ there exists  a complex number $y_i$ such that $e^{y_i}=x_i$. We can define 
$$Y_i=\left( \begin{array}{c|c|c}
                                  I_{i-1} &0_{i-1\times 2} &0_{i-1\times n-i}   \\\hline
                                  0_{2\times i-1}&\exp(\sum_{j=1}^iy_jh)&0_{2\times n-i}\\\hline
                                0_{n-i\times i-1} &0_{n-i\times 2}  & I_{n-i} 
                                  \end{array} \right)\in T_i,$$
and since the determinant of $X$ is equal to $1$, we have $x_{n+1}=\Pi_{i=1}^nx_i^{-1}=e^{-\sum_{j=1}^ny_j}$. Then, 
$$ \Pi_{i=1}^nY_i=\diag(e^{y_1},e^{y_2},\cdots, e^{-\sum_{j=1}^ny_j})=\diag(x_1,x_2,\cdots,x_{n+1})=X.$$
Thus, $X\in\langle\bigcup_{i=1}^n T_i\rangle$ and $\langle\bigcup_{i=1}^n T_i\rangle=T$. 
\end{proof}
Obviously, the Lie algebra of $T$  is equal to $\fh\subset\g$, where $\fh$ is the subgroup of diagonal matrices which trace equal to zero.  

\begin{prop}
Let $N(T)$ be the normalizer of $T$ in $\SL(n+1;\C)$. Then, we have the following equality
\begin{eqnarray}\label{normalizer}
N(T)=\left\{\sum_{i=1}^{n+1}x_iE_{\sigma(i)}|\sigma\in S_{n+1}\mbox{ and }(\Pi_{i=1}^{n+1}x_i)\det\left(\sum_{i=1}^{n+1}E_{\sigma(i)}\right)=1\right\},    
\end{eqnarray}
where $E_{\Si(i)}\coloneqq E_{\Si(i)i}\in\M_{n+1}(\C)$. 
\end{prop}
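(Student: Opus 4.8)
The plan is to prove the set equality in (\ref{normalizer}) by establishing the two inclusions separately, working concretely with the matrix description of $\SL(n+1,\C)$. First I would analyze the structure of an arbitrary element $g\in N(T)$. The defining condition is that $gTg^{-1}=T$, equivalently that conjugation by $g$ preserves the set of diagonal matrices. I would exploit the fact that $T$ contains diagonal matrices with pairwise distinct eigenvalues (such matrices exist since we may freely choose the diagonal entries subject only to $\prod x_i=1$). Fix such a regular diagonal matrix $d=\diag(d_1,\dots,d_{n+1})$ with all $d_i$ distinct. Then $gdg^{-1}$ is again diagonal, so $d$ and $gdg^{-1}$ are two diagonal matrices with the same multiset of eigenvalues; comparing the eigenvectors forces each column of $g$ to be a scalar multiple of a standard basis vector. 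This is the key structural step: it shows $g$ is a \emph{generalized permutation matrix} (a monomial matrix), i.e.\ there is a permutation $\sigma\in S_{n+1}$ and scalars $x_i\in\C^\times$ with $g=\sum_{i=1}^{n+1}x_iE_{\sigma(i)i}=\sum_{i=1}^{n+1}x_iE_{\sigma(i)}$.

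Once $g$ has this monomial form, the remaining constraint is simply that $\det g=1$, since $N(T)\subset\SL(n+1,\C)$. Computing the determinant of a monomial matrix factors as the product of the nonzero entries times the sign contribution of the underlying permutation; concretely $\det g=\bigl(\prod_{i=1}^{n+1}x_i\bigr)\det\bigl(\sum_{i=1}^{n+1}E_{\sigma(i)}\bigr)$, where the second determinant is exactly the signed permutation matrix determinant $\pm1$. Imposing $\det g=1$ recovers precisely the condition displayed in the right-hand set of (\ref{normalizer}). This establishes the inclusion $N(T)\subseteq\{\,\cdots\,\}$.

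For the reverse inclusion, I would take an arbitrary element $g=\sum_{i=1}^{n+1}x_iE_{\sigma(i)}$ of the right-hand set and verify directly that it normalizes $T$. The determinant condition guarantees $g\in\SL(n+1,\C)$, so it only remains to check $gTg^{-1}\subseteq T$. For a diagonal matrix $t=\diag(t_1,\dots,t_{n+1})\in T$, a direct computation shows that conjugating a monomial matrix by $g$ permutes the diagonal entries: $gtg^{-1}=\diag(t_{\sigma^{-1}(1)},\dots,t_{\sigma^{-1}(n+1)})$, where the scalars $x_i$ cancel because they enter $g$ and $g^{-1}$ in reciprocal fashion along matched positions. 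The resulting matrix is diagonal with the same determinant $1$, hence lies in $T$. Since $g^{-1}$ is again monomial (with permutation $\sigma^{-1}$), the same argument gives $g^{-1}Tg\subseteq T$, so $gTg^{-1}=T$ and $g\in N(T)$.

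The main obstacle I anticipate is the first structural step, namely rigorously deducing from $gdg^{-1}\in T$ that $g$ must be monomial. The cleanest route is the eigenvector argument: conjugation sends eigenvectors of $d$ to eigenvectors of $gdg^{-1}$, and because both are regular diagonal matrices their eigenspaces are the coordinate lines $\C e_i$, forcing $g$ to carry coordinate lines to coordinate lines and hence to be monomial. One must be slightly careful to justify that a \emph{single} regular element $d\in T$ suffices (rather than needing all of $T$), and to handle the bookkeeping relating the permutation of eigenvalues to the permutation $\sigma$ indexing the nonzero entries; the index conventions in $E_{\sigma(i)}=E_{\sigma(i)i}$ require care so that the permutation appearing in the determinant formula matches the one describing the nonzero pattern. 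The determinant factorization and the conjugation computation are then routine.
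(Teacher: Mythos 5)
Your proof is correct, but it takes a genuinely different route from the paper on the harder inclusion $N(T)\subseteq\{\cdots\}$. You use the standard regular-element argument: pick a single $d=\diag(d_1,\dots,d_{n+1})\in T$ with distinct entries, observe that $gdg^{-1}$ is again regular diagonal, and conclude from the matching of one-dimensional eigenspaces (all coordinate lines) that $g$ carries coordinate lines to coordinate lines, hence is monomial. The paper instead works only with the generators $Y_i$ of the subtori $T_i$: it expands the off-diagonal entries of $XY_iX^{-1}$, subtracts the identity $\sum_j x_{kj}y_{jl}=0$ coming from $XY=I$, deduces $x_{ki}y_{il}=0$ for $k\neq l$, and then runs a combinatorial argument on the sets $L_i$ of nonzero entries in each row (via the minimal indices $k_i$) to extract the permutation $\sigma$. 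Your eigenvector argument is shorter and more conceptual, and generalizes immediately to any maximal torus of a reductive matrix group; the paper's argument is longer but avoids any appeal to eigenspace decompositions, staying entirely at the level of entrywise matrix identities. Both proofs handle the easy inclusion the same way (direct conjugation of diagonal matrices by a monomial matrix), and your explicit remark that $g^{-1}$ is again monomial, so that $gTg^{-1}\subseteq T$ upgrades to equality, is a point the paper glosses over. The one thing to nail down in a final write-up is the bookkeeping you already flag: the permutation obtained from the eigenspace matching must be reconciled with the convention $E_{\sigma(i)}=E_{\sigma(i)i}$ so that the sign in $\det\bigl(\sum_{i}E_{\sigma(i)}\bigr)=\mathrm{sgn}(\sigma)$ comes out right; this is routine.
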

\begin{proof}
    Let's take $X=\sum_{i=1}^{n+1}x_iE_{\sigma(i)}$ in the right set of Equation (\ref{normalizer}). Is clear that $X^{-1}=\sum_{i=1}^{n+1}x_i^{-1}E_{\sigma(i)}^T$ is the inverse matrix of $X$. Now, let $Y_i\in T_i$ be as follows
$$Y_i=
\left( \begin{array}{c|c|c}
                                  I_{i-1} &0_{i-1\times 2} &0_{i-1\times n-i}   \\\hline
                                  0_{2\times i-1}&e^{yh}&0_{2\times n-i}\\\hline
                                0_{n-i\times i-1} &0_{n-i\times 2}  & I_{n-i} 
                                  \end{array} \right).$$
After some simple calculations,  we have that $XY_iX^{-1}$ is a diagonal matrix and $\det(XY_iX^{-1})$ is equal to $1$. Thus, $X\in N(T)$.

On other hand, let's take an element of $N(T)$, $X=(x_{kl})$ with $X$ inverse being $X^{-1}=(y_{kl})$. Then, for all $Y\in T$, we have that $XYX^{-1}\in T$, including the generators of $T$. Let   $Y_i\in T_i$, for $1\leq i\leq n$, then
$$XY_iX^{-1}=X(z_{kl})=(\sum_{j=1}^{n+1}x_{kj}z_{jl}),$$
where
\[z_{kl}=\left\{\begin{array}{ll}y_{il}\,e^y & \mbox{if } k=i\\y_{i+1l}\,e^{-y} & \mbox{if } k=i+1\\y_{kl} & \mbox{otherwise.}\end{array}\right.\]
 Since $XY_iX^{-1}\in T$, then for $k\neq l$ we have
 $$0=\sum_{j=1}^{n+1}x_{kj}z_{jl}= \left(\sum_{\substack{1\leq j\leq n+1\\ j\not\in\{i,\,i+1\}}} x_{kj}y_{jl}\right)+x_{ki}y_{il}e^y+x_{ki+1}y_{i+1l}e^{-y}.$$
 But for $k\neq l$, we have that $\sum_{j=1}^{n+1}x_{kj}y_{jl}=0$. Thus, 
  \begin{eqnarray*}\sum_{j=1}^{n+1}x_{kj}z_{jl}&=&  x_{ki}y_{il}e^y+x_{ki+1}y_{i+1l}e^{-y}-(x_{ki}y_{il}+x_{ki+1}y_{i+1l})\\&=&x_{ki}y_{il}(e^y-1)+x_{ki+1}y_{i+1l}(e^{-y}-1).\end{eqnarray*}          
It follows that $x_{ki}y_{il}=0=x_{ki+1}y_{i+1l}$. Therefore, $x_{ki}y_{il}=0$ for all $i,\,l,\,k\in\{1,\ldots,n+1\}$ such that $k\neq l$. 
For any $1\leq i\leq n+1$, we can define the set $L_i\coloneqq \{x_{ij}| x_{ij}\neq0\}$, whose elements are in the $i$-th row. Since $\det(X)=1$ we have that $L_i\neq\emptyset$.  

Now, let $k_i=\min\{j|x_{ij}\in L_i\}$. Then, $x_{ik_{i}}y_{k_il}=0$ for all $l\neq i$. It follows that $y_{k_ii}\neq0$. Thus,  $x_{kk_{i}}y_{k_ii}=0$ for any $k\neq i$. Therefore $x_{ik_i}$ is the unique non zero element in the $k_i$-column of $X$. Thus, for $i\neq j$ we have that $k_i\neq k_j$, hence  $\{k_1,\ldots,k_{n+1}\}=\{1,\ldots,n+1\}$. There exist  $\sigma\in S_{n+1}$ such that $\sigma(i)=k_i$. Therefore 
$$X=\sum_{i=1}^{n+1}x_{i\Si(i)}E_{\sigma(i)}.$$
\end{proof}
For any $\Si\in S_{n+1}$, we use  $E_\Si$ to denote $\mbox{sgn}(\Si)E_{\sigma(1)}+\sum_{i=2}^{n+1}E_{\sigma(i)}$, where sgn$(\Si)$ is the sign of $\Si$ (which is  equal to $1$ if $\Si$ is even and is $-1$ if $\Si$ is odd). It is easy to see that $\det\left(E_\Si\right) =1$. Then, we have that the set $\{E_\Si\mid\Si\in S_{n+1}\}$ is a subset of  $N(T)$. 

On other hand, for any $\Si,\tau\in S_{n+1}$, it holds true that:
$$\left(\sum_{i=1}^{n+1}E_{\sigma(i)}\right)\left(\sum_{i=1}^{n+1}E_{\tau(i)}\right)=\sum_{i=1}^{n+1}E_{\Si\circ\tau(i)}.$$
Let $X\in N(T)$, and $\Si\in S_{n+1}$ be such that $X= \sum_{i=1}^{n+1}x_iE_{\sigma(i)}$. Then, $XE_\Si^{-1}$ is equal to $\diag(x_1\mbox{sgn}(\Si),\,x_2,\ldots,\,x_{n+1})$ and $\det(XE_\Si^{-1})=\det(X)\det(E_\Si^{-1})=1$. Thus, $XE_\Si^{-1}\in T$, and $[X]=[E_\Si]$, with $[X],\,[E_\Si]\in N(T)/T$.

We have that, for any  $\Si,\,\tau\in S_{n+1}$ such that $\Si\neq\tau$, $E_\Si E_\tau^{-1}$ is not a diagonal matrix, since $\Si\circ\tau^{-1}\neq\mbox{id}_{S_{n+1}}$. Therefore, for any $\Si,\,\tau\in S_{n+1}$, we have $[E_\Si]=[E_\tau]$ if and only if  $\Si=\tau$. Thus, we can conclude that
$$N(T)/T=\{[E_\Si]\mid\Si\in S_{n+1}\},$$
that is, $N(T)/T$ is isomophic to $S_{n+1}$, the simetric group. Given that at the end of section \ref{4} we saw that $S_{n+1}\cong \mathcal{W}$, then $\mathcal{W}\cong N(T)/T$.

A straightforward calculation reveals that the normalizer of $T_i$ in $G_i$, denoted by $N_i$, has the following form:                              
$$N_i=\left\langle\left\{\left( \begin{array}{c|c|c}
                                  I_{i-1} &0_{i-1\times 2} &0_{i-1\times n-i}   \\\hline
                                  0_{2\times i-1}&a*e-a^{-1}*f&0_{2\times n-i}\\\hline
                                0_{n-i\times i-1} &0_{n-i\times 2}  & I_{n-i} 
                                  \end{array} \right) | a\in\C,\, a\neq 0\right\}, T_i\right\rangle,$$   
from which it is evident that $N_i\subset N(T)$.
 
The orthogonal reflection corresponding to $A_i$ is denoted by $s_i\in \mathcal{W}$.
The proposition of Millson and Toledano \cite[Proposition 2.14]{Tol} will now be divided into three parts, each of which will be proven for our specific case.
\begin{prop} \label{1} For any choice of $\Si_i\in N_i\setminus T_i$, $1\leq i \leq n$, the assignment $S_i\mapsto\Si_i$ extends uniquely to a homomorphism $\Si:B_\g\lra N(T)$, that makes the diagram in Figure (\ref{diagramas}) commute.
\end{prop}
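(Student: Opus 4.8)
The plan is to combine Brieskorn's presentation of $B_\g$ with the explicit structure of the blocks $N_i$. By Brieskorn's theorem (\ref{trenzas}), $B_\g$ is generated by $S_1,\dots,S_n$ subject only to the braid relations; hence, by the universal property of a presentation, the assignment $S_i\mapsto\Si_i$ will extend to a homomorphism $\Si\colon B_\g\lra N(T)$ exactly when the chosen elements $\Si_i$ satisfy those same braid relations inside $N(T)$, and such an extension is automatically unique because the $S_i$ generate $B_\g$. I first note that each $\Si_i$ lies in $N_i\subset N(T)$, so the images do land in $N(T)$. The work therefore splits into three tasks: recording uniqueness, checking that the diagram commutes, and verifying the braid relations.

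The uniqueness and the commuting diagram are the easy parts. Uniqueness is immediate from the fact that $\{S_i\}$ generates $B_\g$. For the diagram I would compute the composite $B_\g\xrightarrow{\Si}N(T)\lra N(T)/T\cong\mathcal{W}$ on generators: the explicit form of $N_i$ shows that $N_i/T_i$ has order two, and that its nontrivial coset is sent to $s_i$ under $N(T)/T\cong\mathcal{W}$. Since $\Si_i\in N_i\setminus T_i$, its class is precisely $s_i$, which matches the image of $S_i$ under the projection $B_\g\lra\mathcal{W}$. As the two composites agree on each generator $S_i$, they agree on all of $B_\g$, so the diagram of Figure (\ref{diagramas}) commutes.

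The substantive step is verifying the braid relations for an \emph{arbitrary} $\Si_i\in N_i\setminus T_i$. Since $T_i$ consists of the embedded diagonal blocks $\exp(xh)$, every element of $N_i\setminus T_i$ is the embedding of an anti-diagonal block $\left(\begin{smallmatrix}0 & a_i\\ -a_i^{-1}&0\end{smallmatrix}\right)$, $a_i\in\C^\ast$, with the identity in all other coordinates. When $|i-j|\ge 2$, the matrices $\Si_i$ and $\Si_j$ differ from the identity only on the disjoint coordinate sets $\{i,i+1\}$ and $\{j,j+1\}$, so they commute and $\Si_i\Si_j=\Si_j\Si_i$ ($m_{ij}=2$). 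When $|i-j|=1$, say $j=i+1$ with $m_{ij}=3$, both $\Si_i$ and $\Si_{i+1}$ are supported on the three coordinates $\{i,i+1,i+2\}$, so the relation $\Si_i\Si_{i+1}\Si_i=\Si_{i+1}\Si_i\Si_{i+1}$ reduces to a single $3\times3$ identity; a direct multiplication will show that both triple products equal the same matrix, antidiagonal on $\{i,i+1,i+2\}$ with entries $a_ia_{i+1}$, $-1$, $a_i^{-1}a_{i+1}^{-1}$ and the identity elsewhere.

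The hard part will be this adjacent case, and the point to emphasize is that the relation must survive for \emph{every} lift, not only for a distinguished choice such as the Tits representatives $\exp(e_i)\exp(-f_i)\exp(e_i)$. What makes the statement hold ``for any choice'' is that the two triple products produce exactly the same entries $a_ia_{i+1}$, $-1$, $a_i^{-1}a_{i+1}^{-1}$ regardless of $a_i$ and $a_{i+1}$; once this $3\times3$ computation is carried out, all braid relations of (\ref{trenzas}) hold in $N(T)$, the homomorphism $\Si$ exists, and the proof is complete.
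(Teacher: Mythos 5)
Your proposal is correct and follows essentially the same route as the paper: reduce via Brieskorn's presentation to checking the braid relations for the antidiagonal-block representatives, handle $|i-j|\ge 2$ by disjointness of supports, and handle $|i-j|=1$ by the $3\times 3$ computation, whose common value (antidiagonal with entries $a_ia_{i+1}$, $-1$, $(a_ia_{i+1})^{-1}$) matches the paper's exactly. You are in fact slightly more complete than the paper, which leaves the uniqueness and the commutativity of the diagram implicit.
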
  
\begin{proof} We must show that the $\Si_i$ satisfy the braid relations, which can be verified through simple calculations. Let $i$ be an element in $\{1,\cdots,\,n\}$, and
$$\Si_i\coloneqq \left( \begin{array}{c|c|c}
                                  I_{i-1} &0_{i-1\times 2} &0_{i-1\times n-i}   \\\hline
                                  0_{2\times i-1}&a_i*e-a_i^{-1}*f&0_{2\times n-i}\\\hline
                                0_{n-i\times i-1} &0_{n-i\times 2}  & I_{n-i} 
                                  \end{array} \right)\in N_i\setminus T_i.$$ 
Recall that
\[m_{ij}=\left\{\begin{array}{lcl}3 & \mbox{si}& |i-j|=1\\2 & \mbox{si}& |i-j|\geq 2.\end{array}\right.\]
We have the following cases.
\begin{description}
\item[ Case 1] When $|i-j|=1$, we have to prove that $\Si_i\Si_j\Si_i=\Si_j\Si_i\Si_j$. Without loss of generality, we can chose $j=i+1$. Due to the definition of matrices $\Si_i$ and $\Si_{i+1}$, it suffices to observe the behavior of their following submatrices in $\M_{3}(\C)$. 
$$M_i\coloneqq a_iE_{12}-a_i^{-1}E_{21}+E_{33}\mbox{,}\,\,\, M_{i+1}\coloneqq E_{11}+a_{i+1}E_{23}-a_{i+1}^{-1}E_{32}.$$
We have
\begin{eqnarray*}
M_iM_{i+1}M_i&=&(a_ia_{i+1}E_{13}-a_i^{-1}E_{21}+E_{33})M_i\\
&=&a_ia_{i+1}E_{13}-E_{22}+(a_ia_{i+1})^{-1}E_{31}\\
&=&(a_{i}E_{12}+a_{i+1}E_{23}+(a_ia_{i+1})^{-1}E_{31}) (E_{11}+a_{i+1}E_{23}-a_{i+1}^{-1}E_{32})\\
&=&(E_{11}+a_{i+1}E_{23}-a_{i+1}^{-1}E_{32})\left(a_iE_{12}-a_i^{-1}E_{21}+E_{33}\right) M_{i+1}\\
&=&M_{i+1}M_iM_{i+1}.
\end{eqnarray*}
It follows that $\Si_i\Si_j\Si_i=\Si_j\Si_i\Si_j$.
\item[Case 2]When $|i-j|\geq 2$, we have to prove that $\Si_i\Si_j=\Si_j\Si_i$. Without loss of generality, we can chose $i<j$. Consider the submatrices of $\Si_i$ and $\Si_j$ below:
\begin{eqnarray*}M'_i&=& \left( \begin{array}{c|c}
                                  I_{i-1} &0_{i-1\times 2}    \\\hline
                                  0_{2\times i-1}&a_i*e-a_i^{-1}*f\\
                                \end{array} \right)\in\M_{i+1}(\C),\\
                M_{ij}&=& \left( \begin{array}{c|c|c}
                                  I_{j-i-2} &0_{j-i-2\times 2} &0_{j-i-2\times n-j}   \\\hline
                                  0_{2\times j-i-2}&a_j*e-a_j^{-1}*f&0_{2\times n-j}\\\hline
                                0_{n-i\times j-i-2} &0_{n-j\times 2}  & I_{n-j} 
                                  \end{array} \right)\in\M_{n-i}(\C).
\end{eqnarray*}
Then

\begin{eqnarray*}
\Si_i\Si_j&=&\left( \begin{array}{c|c}
                                  M'_i &0_{i+1\times n-i}    \\\hline
                                  0_{n-i\times i+1}&I_{n-i}\\
                                \end{array} \right)
                                \left( \begin{array}{c|c}
                                  I_{i+1} &0_{i+1\times n-i}    \\\hline
                                  0_{n-i\times i+1}&M_{ij}\\
                                \end{array} \right)\\
&=&\left( \begin{array}{c|c}
                                  M'_i &0_{i+1\times n-i}    \\\hline
                                  0_{n-i\times i+1}&M_{ij}\\
                                \end{array} \right)\\
&=&\left( \begin{array}{c|c}
                                  I_{i+1} &0_{i+1\times n-i}    \\\hline
                                  0_{n-i\times i+1}&M_{ij}\\
                                \end{array} \right)\left( \begin{array}{c|c}
                                  M'_i &0_{i+1\times n-i}    \\\hline
                                  0_{n-i\times i+1}&I_{n-i}\\
                                \end{array} \right)\\
&=&\Si_j\Si_i.
\end{eqnarray*}
\end{description}
Therefore, the braid relations are satisfied.
\end{proof}

\begin{prop} If $\Si, \,\Si':B_\g\lra N(T)$ are the homomorphisms corresponding to the choices $\{\Si_i\}_{i=1}^n$ and $\{\Si'_i\}_{i=1}^n$, with $\Si_i,\Si'_i\in N_i\setminus T_i$, respectively, then there exists a $t\in T$ such that, for any $S\in B_\g$, $\Si(S)=t\Si'(S)t^{-1}$.
\end{prop}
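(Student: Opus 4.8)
The plan is to reduce the requirement of simultaneous conjugation to a single, manifestly solvable system of equations on the diagonal torus $T$. First I would observe that for each $i$ the quotient $N_i/T_i$ has order two, so $\Si_i$ and $\Si_i'$, both lying in $N_i\setminus T_i$, represent the same nontrivial coset and therefore differ by an element of $T_i$. Parametrising them as in Proposition \ref{1} by $a_i,a_i'\in\C\setminus\{0\}$, a direct $\SL(2,\C)$ computation gives $t_i\coloneqq\Si_i(\Si_i')^{-1}\in T_i\subset T$, the matrix whose only nontrivial block, at positions $i,i+1$, is $\diag(a_i/a_i',\,a_i'/a_i)$; thus $\Si_i=t_i\Si_i'$.

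Next I would take an arbitrary $t\in T$ and rewrite $t\,\Si_i'\,t^{-1}$. Because $\Si_i'\in N_i\setminus T_i\subset N(T)$ represents the simple reflection $s_i$ under the isomorphism $N(T)/T\cong\mathcal{W}$, conjugation by $\Si_i'$ acts on $T$ precisely as $s_i$, interchanging the $i$-th and $(i+1)$-th diagonal entries; this rests on the elementary fact that an anti-diagonal element of $\SL(2,\C)$ conjugates $\diag(d,d^{-1})$ to $\diag(d^{-1},d)$. Hence $t\,\Si_i'\,t^{-1}=\bigl(t\,s_i(t)^{-1}\bigr)\Si_i'$, and comparison with $\Si_i=t_i\Si_i'$ shows that $\Si_i=t\,\Si_i'\,t^{-1}$ holds for every $i$ if and only if $t\,s_i(t)^{-1}=t_i$ for all $i$. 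Writing $t=\diag(d_1,\dots,d_{n+1})$, this is the explicit system $d_i/d_{i+1}=a_i/a_i'$ for $1\le i\le n$.

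Finally I would solve this system inside $T$. Putting $c_i\coloneqq a_i/a_i'$ and unwinding the recursion $d_{i+1}=d_i/c_i$ yields $d_k=d_1\big/(c_1\cdots c_{k-1})$ for every $k$, so the sole remaining condition is $\det t=\prod_{k=1}^{n+1}d_k=1$, which amounts to $d_1^{\,n+1}=\prod_{k=1}^{n+1}(c_1\cdots c_{k-1})$ and is satisfied by any $(n+1)$-th root in $\C\setminus\{0\}$. With such a $t$ fixed, the map $S\mapsto t\,\Si'(S)\,t^{-1}$ is a homomorphism $B_\g\to N(T)$ agreeing with $\Si$ on each generator $S_i$, hence equal to $\Si$ throughout $B_\g$, which is the assertion.

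The only genuine difficulty lies in the second paragraph: the right bookkeeping is the twisted identity $t\,s_i(t)^{-1}=t_i$ coming from the reflection action, not a plain difference in $T$. Once this is identified the existence of $t$ is immediate, the single point requiring care being that the solution must have determinant one, which is possible exactly because the overall scalar $d_1$ remains free.
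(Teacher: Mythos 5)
Your argument is correct and follows essentially the same route as the paper: both reduce the problem to finding $t\in T$ satisfying $t\,s_i(t)^{-1}=\Si_i(\Si_i')^{-1}\in T_i$ for every $i$, using that conjugation by $t$ multiplies $\Si_i'$ by an element of $T_i$ determined by the ratio $d_i/d_{i+1}$. The paper writes the solution in closed form as $t=e^{-\sum_j c_j\La_j^\vee}$ via the fundamental coweights, which is precisely your recursion $d_k=d_1/(c_1\cdots c_{k-1})$ with the determinant-one normalization built in.
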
                         
\begin{proof} For any $i\in\{1,\cdots,n\}$, let $\Si_i$ and $\Si'_i$ be the following elements
\begin{eqnarray*}\Si_i&=&\left( \begin{array}{c|c|c}
                                  I_{i-1} &0_{i-1\times 2} &0_{i-1\times n-i}   \\\hline
                                  0_{2\times i-1}&a_i*e-a_i^{-1}*f&0_{2\times n-i}\\\hline
                                0_{n-i\times i-1} &0_{n-i\times 2}  & I_{n-i} 
                                  \end{array} \right),\\
\Si'_i&=& \left( \begin{array}{c|c|c}
                                  I_{i-1} &0_{i-1\times 2} &0_{i-1\times n-i}   \\\hline
                                  0_{2\times i-1}&b_i*e-b_i^{-1}*f&0_{2\times n-i}\\\hline
                                0_{n-i\times i-1} &0_{n-i\times 2}  & I_{n-i} 
                                  \end{array} \right).                                  
\end{eqnarray*}
We can chose $t_i\in T_i$, as follows
$$t_i= \left( \begin{array}{c|c|c}
                                  I_{i-1} &0_{i-1\times 2} &0_{i-1\times n-i}   \\\hline
                                  0_{2\times i-1}&\begin{array}{cc}
                                   a_i^{-1}b_i & 0 \\
                                   0 & a_ib_i^{-1}  
                                  \end{array}&0_{2\times n-i}\\\hline
                                0_{n-i\times i-1} &0_{n-i\times 2}  & I_{n-i} 
                                  \end{array} \right).$$
From our choice of $t_i$, it is clear that $\Si_i't_i=\Si_i$. Let $c_i$ be a complex number such that $t_i=e^{c_ih_i}$. Now, take $\{\La_i^\vee\}_{i=1}^n\subset \fh$  the co-weights defined by $\A_i(\La_j^\vee)=\D_{ij}$, where $\D_{ij}$ is Kronecker's delta. In our case, $\La_j^\vee=\diag(h_{1j},\,h_{2j},\cdots,\,h_{n+1\,j})$ with $$h_{ij}= \left\{ \begin{array}{lcl}
\frac{n+1-j}{n+1}&\mbox{ si}&i\leq j\\  
& &\\ \frac{-j}{n+1}&\mbox{ si}&i>j.
\end{array}
\right.$$
 We choose a diagonal matrix $t\in T$ as follow $t=e^{-\sum_jc_j\La_j^\vee}$. Since $t$ and its inverse matrix are diagonal matrices, we only need to consider the following product of submatrices: 
\begin{eqnarray*}
&&\left( \begin{array}{cc}e^{-\sum_jc_jh_{ij}}&0\\0&e^{-\sum_jc_jh_{i+1,\, j}}\\\end{array} \right)\left( \begin{array}{cc}0&b_i\\-b_i^{-1}&0\\\end{array} \right)\left( \begin{array}{cc}e^{\sum_jc_jh_{ij}}&0\\0&e^{\sum_jc_jh_{i+1,\, j}}\\\end{array} \right)\\
&=&\left( \begin{array}{cc}0& b_i \exp({\sum_jc_j(h_{i+1,\, j}-h_{i j})}\\b_i^{-1} \exp(\sum_jc_j(h_{i j}-h_{i+1,\, j}))&0\\\end{array} \right)\\
&=&\left( \begin{array}{cc} 0 & b_i \exp({\sum_jc_j(- \A_i(\La_ j^\vee))})\\ -b_i^{-1} \exp({\sum_jc_j(\A_i(\La_ j^\vee))})&0\end{array} \right)\\
&=&\left( \begin{array}{cc}0&b_ie^{-c_i}\\-b_i^{-1}e^{c_i}&0\\\end{array} \right)\\
&=&\left( \begin{array}{cc}0&b_i\\-b_i^{-1}&0\\\end{array} \right)\left( \begin{array}{cc}e^{c_i}&0\\0&e^{-c_i}\\\end{array} \right).\\
\end{eqnarray*}
 Then, we can conclude that
\begin{eqnarray*}
 t \Si'_it^{-1}= e^{-\sum_jc_j\La_j^\vee}\,\,\Si'_i\,\, e^{\sum_jc_j\La_j^\vee}=\Si'_i\,\,e^{c_ih_{\A_i}} =\Si'_i\,\,t_i = \Si_i.
\end{eqnarray*}                                             
Thus, we have for all $i\in\{1,\cdots,\,n\}$ that $t\, \Si'_i\, t^{-1}=\Si_i$, with $t=e^{-\sum_jc_j\La_j^\vee}$. 

Therefore, $t\, \Si'\, t^{-1}=\Si$.
\end{proof}

\begin{prop} \label{propi2} For any $\Si:B_\g\lra N(T)$, the homomorphism corresponding to the choice $\{\Si_i\}_{i=1}^n$, is subjected to the relations:
\begin{eqnarray}\label{2.9}\underbrace{\Si_i\Si_j\cdots}_{m_{ij}}&=&\underbrace{\Si_j\Si_i\cdots}_{m_{ij}}\\
\label{2.10}\Si_i^2\Si_j^2&=&\Si_j^2\Si_i^2\\
\label{2.11}\Si_i^4&=&1\\
\label{2.12}\Si_i\Si_j^2\Si_i^{-1}&=&\Si_j^2\Si_i^{-2\langle \A_j,h_i\rangle} \end{eqnarray}
for any $1\leq i\neq j\leq n$, where the number $m_{ij}$ of factors on each side of (\ref{2.9}) is equal to the order of $s_is_j\in \mathcal{W}$.
\end{prop}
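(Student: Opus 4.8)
The plan is to reduce all four relations to the single computation of $\Si_i^2$. Inside the only nontrivial $2\times 2$ block of $\Si_i$ one has
\[
\begin{pmatrix} 0 & a_i\\ -a_i^{-1} & 0\end{pmatrix}^2=-I_2,
\]
so $\Si_i^2=\diag(1,\dots,1,-1,-1,1,\dots,1)\in T$, with the two entries $-1$ in positions $i$ and $i+1$; in particular $\Si_i^2$ does not depend on the parameter $a_i$. Three of the four relations then follow at once. Squaring again yields $\Si_i^4=(\Si_i^2)^2=I$, which is (\ref{2.11}). Since $\Si_i^2$ and $\Si_j^2$ are both diagonal, they commute, giving (\ref{2.10}). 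Finally, the braid relation (\ref{2.9}) is exactly the statement verified for the matrices $\Si_i$ in Proposition \ref{1}, so it may be quoted directly.

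The only remaining work is (\ref{2.12}), and for this I would first record the conjugation rule for $\Si_i$ on diagonal matrices. A direct $2\times 2$ computation shows that for any diagonal $D=\diag(d_1,d_2)$,
\[
\begin{pmatrix} 0 & a_i\\ -a_i^{-1} & 0\end{pmatrix}\begin{pmatrix} d_1 & 0\\ 0 & d_2\end{pmatrix}\begin{pmatrix} 0 & -a_i\\ a_i^{-1} & 0\end{pmatrix}=\begin{pmatrix} d_2 & 0\\ 0 & d_1\end{pmatrix},
\]
independently of $a_i$. Hence conjugation by $\Si_i$ leaves every diagonal matrix diagonal and permutes its diagonal entries exactly by the transposition $s_i=(i,i+1)$. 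Applying this to $D=\Si_j^2$, the matrix $\Si_i\Si_j^2\Si_i^{-1}$ is again diagonal, with its two entries $-1$ located in positions $s_i(j)$ and $s_i(j+1)$.

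It then remains to match this against the right-hand side of (\ref{2.12}) according to $|i-j|$. If $|i-j|\ge 2$, then $s_i$ fixes both $j$ and $j+1$, so $\Si_i\Si_j^2\Si_i^{-1}=\Si_j^2$; and since $\langle\A_j,h_i\rangle=\E_j(h_i)-\E_{j+1}(h_i)=0$ in this range, the right-hand side is $\Si_j^2$ as well. If $|i-j|=1$, say $j=i+1$ (the case $j=i-1$ being symmetric), then $\langle\A_{i+1},h_i\rangle=\E_{i+1}(h_i)-\E_{i+2}(h_i)=-1$, so $-2\langle\A_j,h_i\rangle=2$ and the right-hand side equals $\Si_j^2\Si_i^2$; a direct multiplication of these two diagonal matrices puts the entries $-1$ in positions $i$ and $i+2$, which is precisely $\diag$ with $-1$'s at $s_i(i+1)=i$ and $s_i(i+2)=i+2$. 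Thus both sides of (\ref{2.12}) agree. The main point --- and the only place requiring care --- is this last bookkeeping step: one must track how the reflection $s_i$ moves the two distinguished positions of $\Si_j^2$ and confirm that the exponent $-2\langle\A_j,h_i\rangle$ supplies exactly the correcting factor $\Si_i^2$ needed when $|i-j|=1$.
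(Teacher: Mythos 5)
Your proposal is correct and follows essentially the same route as the paper: compute $\Si_i^2$ as the diagonal matrix with $-1$ in positions $i$ and $i+1$, deduce (\ref{2.10}) and (\ref{2.11}) immediately, quote Proposition \ref{1} for (\ref{2.9}), and check (\ref{2.12}) by cases on $|i-j|$. Your only deviation is organizational: you verify (\ref{2.12}) by observing that conjugation by $\Si_i$ permutes diagonal entries via $s_i$ and then tracking where the two $-1$'s land, whereas the paper multiplies the matrices out term by term; the conclusion and the case analysis are identical.
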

\begin{proof} 
In proposition \ref{1}, we have already proved that $\Si_i$ satisfies equality (\ref{2.9}). Additionally, we note that for all $1\leq j \leq n$, if we take 
$$z_i={\scriptstyle\left( \begin{array}{c|c|c}
                                  I_{i-1} &0_{i-1\times 2} &0_{i-1\times n-i}   \\\hline
                                  0_{2\times i-1}&a*e-a^{-1}*f&0_{2\times n-i}\\\hline
                                0_{n-i\times i-1} &0_{n-i\times 2}  & I_{n-i} 
                                  \end{array} \right),}$$
which can also be viewed as $z_i=\left(\sum_{\substack{1\leq l\leq n+1\\ l\not\in\{i,\,i+1\}}} E_{l,\,l}\right)+(aE_{i,\,i+1}-a^{-1}E_{i+1,\,i})$, it follows that 
\begin{eqnarray*}
z_i^{-1}&=&\left(\sum_{\substack{1\leq l\leq n+1\\ l\not\in\{i,\,i+1\}}} E_{l,\,l}\right)+(-aE_{i,\,i+1}+a^{-1}E_{i+1,\,i}),\\
z_i^2&=&\left(\sum_{\substack{1\leq l\leq n+1\\ l\not\in\{i,\,i+1\}}} E_{l,\,l}\right)-(E_{i,\,i}+E_{i+1,\,i+1}).
\end{eqnarray*}
It is then evident that the $\Si_i$ satisfy the relations (\ref{2.10}) y (\ref{2.11}).                                 
For the last equation, we can take $1\leq i\neq j\leq n$, and thus have two cases:
\begin{description}
\item[ Case 1] If $|j-i|=1$, then $\langle \A_j,h_i\rangle=-1$. Let's suppose  $i=j-1$, this implies that:  
\begin{eqnarray*}
&&\Si_i\Si_j^2\Si_i^{-1}\\
&=& \Si_{j-1}\Si_j^2\Si_{j-1}^{-1}\\
&=& {\left(\sum_{\substack{1\leq l\leq n+1\\ l\not\in\{j-1,\,j\}}} E_{l,\,l}+a_{j-1}E_{j-1j}-a_{j-1}^{-1}E_{jj-1}\right)  \left(\sum_{\substack{1\leq l\leq n+1\\ l\not\in\{j,\,j+1\}}} E_{l,\,l}-E_{jj}-E_{j+1\,j+1}\right) }\Si_{j-1}^{-1}\\
&=&\left(\sum_{\substack{1\leq l\leq n+1\\ l\not\in\{j-1,\,j,\,j+1\}}} E_{l,\,l}-a_{j-1}E_{j-1j}-a_{j-1}^{-1}E_{jj-1}-E_{j+1j+1}\right)\Si_{j-1}^{-1}\\
&=& \left(\sum_{\substack{1\leq l\leq n+1\\ l\not\in\{j-1,\,j+1\}}} E_{l,\,l}\right)-(E_{j-1j-1}+E_{j+1\,j+1})\\
&=& \left(\sum_{\substack{1\leq l\leq n+1\\ l\not\in\{j,\,j+1\}}} E_{l,\,l}-E_{jj}-E_{j+1\,j+1}\right)\left(\sum_{\substack{1\leq l\leq n+1\\ l\not\in\{j-1,\,j\}}} E_{l,\,l}-E_{j-1j-1}-E_{j\,j}\right)\\
&=&\Si_j^2\Si_{j-1}^2=\Si_j^2\Si_{i}^2=\Si_j^2\Si_{i}^{(-2)(-1)}.
\end{eqnarray*}  
The case in which $i=j+1$ is analogous to the previous one.
\item[Case 2]If $|j-i|\geq 2$, then $\langle \A_j,h_i\rangle=0$. 

Thus, $\Si_j^2\Si_i^{-2\langle \A_j,h_i\rangle}=\Si_j^2\Si_i^{(-2)(0)}=\Si_j^2\Si_i^{(0)}=\Si_j^2$. Since $|j-i|\geq 2$, we have that $\{i,i+1\}\cap\{j,j+1\}=\emptyset$, from where we have that
\begin{eqnarray*}
&&\Si_i\Si_j^2\Si_i^{-1}\\
&=& {\left(\sum_{\substack{1\leq l\leq n+1\\ l\not\in\{i,\,i+1\}}} E_{l,\,l}+a_{i}E_{i,\,i+1}-a_{i}^{-1}E_{i+1i}\right)  \left(\sum_{\substack{1\leq l\leq n+1\\ l\not\in\{j,\,j+1\}}} E_{l,\,l}-E_{jj}-E_{j+1\,j+1}\right) }\Si_{i}^{-1}\\
&=&{\left(\sum_{\substack{1\leq l\leq n+1\\ l\not\in\{i,\,i+1,\,j,\,j+1\}}} E_{l,\,l}+a_{i}E_{i,\,i+1}-a_{i}^{-1}E_{i+1i}-E_{jj}-E_{j+1\,j+1}\right) }\Si_{i}^{-1}\\
&=& \left(\sum_{\substack{1\leq l\leq n+1\\ l\not\in\{j,\,j+1\}}} E_{l,\,l}-E_{jj}-E_{j+1\,j+1}\right)\\&=&\Si_j^2.
\end{eqnarray*} 
\end{description}
Therefore, it can be concluded that equality (\ref{2.12}) has been achieved.
\end{proof}

\begin{cor}
    Let $n$ be a positive integer and $\tau_i$'s be the automorphisms  of $\fraksl(n+1,\C)$ defined in  (\ref{tau}). Then the automorphisms $\tau_i$, $1\leq i\leq n$ are subjected to the relations (\ref{2.9})-(\ref{2.12}).
\end{cor}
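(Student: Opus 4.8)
The plan is to derive the corollary from Proposition \ref{propi2} by realizing each $\tau_i$ as conjugation by the element $\Si_i=\exp(e_i)\exp(-f_i)\exp(e_i)$ already studied there, and then transporting the relations (\ref{2.9})--(\ref{2.12}) along the group homomorphism $\mathrm{Ad}\colon\SL(n+1,\C)\lra\mathrm{Aut}(\g)$ that sends $g$ to the automorphism $x\mapsto gxg^{-1}$. The crux is the identity $\tau_i=\mathrm{Ad}(\Si_i)$, announced in the introduction.

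First I would prove this identity. Applying Remark \ref{remark1} to each of the three factors in the definition (\ref{tau}) of $\tau_i$ turns the composite $\exp(\ad_{e_i})\exp(\ad_{-f_i})\exp(\ad_{e_i})$ into the map $Y\mapsto \exp(e_i)\exp(-f_i)\exp(e_i)\,Y\,\exp(-e_i)\exp(f_i)\exp(-e_i)$, where the inner conjugating matrices telescope; this is exactly $Y\mapsto\Si_i Y\Si_i^{-1}$, so $\tau_i=\mathrm{Ad}(\Si_i)$.

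Next I would check that this $\Si_i$ lies in $N_i\setminus T_i$, so that Proposition \ref{propi2} applies to it. Since $e_i$ and $f_i$ are supported on the $(i,i+1)$-block, it suffices to evaluate $\exp(e)\exp(-f)\exp(e)$ in $\SL(2,\C)$; multiplying the three unipotent matrices gives $\left(\begin{smallmatrix}0&1\\-1&0\end{smallmatrix}\right)=e-f$, which is the element $a*e-a^{-1}*f$ of $N_i$ with $a=1$ and is visibly non-diagonal, hence not in $T_i$. Thus $\Si_i\in N_i\setminus T_i$, and Proposition \ref{propi2} shows that the $\Si_i$ obey (\ref{2.9})--(\ref{2.12}).

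Finally, since $\mathrm{Ad}$ is a group homomorphism it respects products, inverses, and integer powers, and sends the identity matrix to $\id$; note that the exponent $-2\langle\A_j,h_i\rangle$ occurring in (\ref{2.12}) is an integer by Theorem \ref{theorem5.2}, so $\Si_i^{-2\langle\A_j,h_i\rangle}$ is a genuine power. Applying $\mathrm{Ad}$ to each relation (\ref{2.9})--(\ref{2.12}) and using $\tau_i=\mathrm{Ad}(\Si_i)$ therefore yields the corresponding relations for the $\tau_i$ verbatim, which are exactly (\ref{0.2})--(\ref{0.6}) of Theorem \ref{theorem1}. I do not anticipate a real obstacle: a homomorphism automatically preserves relations in the forward direction, so the only care needed is the bookkeeping in the threefold use of Remark \ref{remark1} and the verification that $\exp(e_i)\exp(-f_i)\exp(e_i)$ lands in $N_i\setminus T_i$ rather than in $T_i$.
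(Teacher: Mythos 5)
Your proposal is correct and follows essentially the same route as the paper: use Remark \ref{remark1} (applied three times, telescoping) to get $\tau_i(Y)=\Si_i Y\Si_i^{-1}$ with $\Si_i=\exp(e_i)\exp(-f_i)\exp(e_i)$, compute that this matrix is the element of $N_i\setminus T_i$ with $a=1$, and transport the relations of Proposition \ref{propi2} through conjugation. Your explicit remark that $\mathrm{Ad}$ is a group homomorphism (so relations pass forward verbatim) is a slightly more careful phrasing of the paper's closing sentence, but the argument is identical in substance.
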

\begin{proof}
As we saw in Remark (\ref{remark1}), for any $X,\,Y$ matrices in $\M_{n+1}(\C)$, we have that $\exp(\ad_X)(Y)$ is equal to $\exp(X)Y\exp(-X)$, including the  particular case where $X=e_i$ or $f_i$. 

For all $1\leq i\leq n$, let $\Si_i$ be equal to $\exp(e_i)\exp(-f_i)\exp(e_i)$.  Then,
\begin{eqnarray*}
\tau_i(Y)&=&\exp(\ad_{e_i})\exp(\ad_{-f_i})\exp(\ad_{e_i})(Y)\\
&=&\exp(e_i)[\exp(-f_i)[\exp(e_i)Y\exp(-e_i)]\exp(f_i)]\exp(-e_i)\\
&=&(\exp(e_i)\exp(-f_i)\exp(e_i))Y(\exp(-e_i)\exp(f_i)\exp(-e_i))\\
&=&\Si_iY\Si_i^{-1}.
\end{eqnarray*}
From the definition of element $e_i$ and $f_i$, we have that one of the possible choices of elements of $N_i\setminus T_i$ is
\begin{eqnarray*}
\Si_i&=&(I_{n+1}+E_{i,\, i+1})(I_{n+1}-E_{i+1,\, i})(I_{n+1}+E_{i,\, i+1})\\
&=&I_{n+1}+E_{i,\, i+1}-E_{i+1, i}-E_{i,\, i}-E_{i+1,\, i+1}\\
&=& \left( \begin{array}{cccc}
                                  I_{i-1} & &  &    \\
                                  & 0 & 1 &   \\
                                  & -1 & 0 &  \\
                                &  &  & I_{n-i} 
                                  \end{array} \right).
\end{eqnarray*}
Thus, $\{\Si_i\}_{i=1}^n$ satisfies the relations  (\ref{2.9})-(\ref{2.12}). Since $\tau_i(Y)=\Si_iY\Si_i^{-1}$, we can conclude that the standard automorphisms satisfy the desired relations. 
\end{proof}

\section*{Acknowledgment}
I would like to thank CONACYT for the Support for the Training of Human Resources with registration number 25412 of the Research Project CB-2014/239255 and Christof Geiss for his support, guidance and encouragement. Part of this work was done during my Master's studies at the Universidad Nacional Autónoma de México.

\end{document}